\theoremstyle{plain}
\newtheorem{theorem}{Theorem}
\newtheorem{proposition}[theorem]{Proposition}
\newtheorem{lemma}[theorem]{Lemma}
\newtheorem{remark}[theorem]{Remark}
\newtheorem{example}[theorem]{Example}
\def\section{\@startsection {section}{1}{\z@}{-3.5ex plus -1ex minus 
-.2ex}{2.3ex plus .2ex}{\large\bf}}
\def\subsection{\@startsection {subsection}{1}{\z@}{-3.5ex plus -1ex minus 
-.2ex}{2.3ex plus .2ex}{\large\it}}
\begin{document}

\title{\textbf{\Large A strong and weak approximation scheme for\\ stochastic differential equations driven by\\ a time-changed Brownian motion}}

\author{Ernest Jum\thanks{Winston--Salem, NC, USA. Email: ernyjum@gmail.com} \and Kei Kobayashi\thanks{Department of Mathematics, The University of Tennessee, 1403 Circle Drive, Knoxville, TN 37996, USA. Email: kkobayas@utk.edu}}
\date{ }
\maketitle                                                       
\vspace{-2mm}
\renewcommand{\thefootnote}{\fnsymbol{footnote}}

\begin{abstract}
This paper establishes a discretization scheme for a large class of stochastic differential equations driven by a time-changed Brownian motion with drift, where the time change is given by a general inverse subordinator. The scheme involves two types of errors: one generated by application of the Euler--Maruyama scheme and the other ascribed to simulation of the inverse subordinator. 
With the two errors carefully examined, the orders of strong and weak convergence are established. In particular, an improved error estimate for the Euler--Maruyama scheme is derived, which is required to guarantee the strong convergence. Numerical examples are attached to support the convergence results.
     \footnote[0]{\textit{AMS 2010 subject classifications:} 
     60H35, 65C30, 60H10
     \textit{Keywords:} stochastic differential equation, numerical approximation, order of convergence, time-changed Brownian motion, inverse subordinator.}
\end{abstract}

\large

\noindent
\textit{}

\section{Introduction}\label{section_introduction}

Time-fractional versions of classical Kolmogorov or Fokker--Planck equations have been widely used to study dynamics of anomalous diffusions observed in e.g.\ physics \cite{MetzlerKlafter00,Zaslavsky}, finance \cite{GMSR,Magdziarz_BS}, hydrology \cite{BWM}, and cell biology \cite{Saxton}. 
Such fractional partial differential equations are known to be connected with limit processes arising from certain weakly
convergent sequences or triangular arrays of continuous-time random walks. 
These limit processes are time-changed L\'evy processes, where the time changes
are given by the inverses of certain subordinators (see \cite{MS_1,MS_2} for details).

In \cite{HKU-1}, the authors identify a wide class of stochastic differential
equations (SDEs) whose associated Kolmogorov-type equations are
 time-fractional distributed order pseudo-differential equations, where
the driving processes of the SDEs are time-changed L\'evy processes. In connection with these SDEs, a detailed discussion of stochastic integrals and SDEs driven by time-changed semimartingales is provided in \cite{Kobayashi}.
A recent work \cite{ScalasViles}
employs a continuous-time random walk approach presented by \cite{Burr} to construct sequences which converge weakly to stochastic integrals driven by time-changed stable L\'evy processes, where the time change is given by the inverse of a stable subordinator.

In this paper, combining the duality principle established in \cite{Kobayashi} (see Lemma \ref{Lemma_duality} in Section \ref{section_preliminaries}) 
with an idea of approximations of inverse subordinators described in \cite{Magdziarz_simulation,Magdziarz_spa},
we will present a discretization scheme for a large class of SDEs driven by a time-changed Brownian motion which are of the form 
\begin{align*}
	Y(t)=y_0+\int_0^t b(E(r),Y(r)) dE(r)+\int_0^t \sigma(E(r),Y(r)) dB(E(r)),
\end{align*}
where $B$ is a Brownian motion and $E$ is an independent time change given by an inverse subordinator with infinite L\'evy measure (to be precisely defined in Section \ref{section_preliminaries}). 
Our approximation scheme extends a scheme presented in Section III of \cite{GajdaMagdziarz} to SDEs of the above form with general time-dependent coefficients and time changes; in that paper, the coefficients are $b(t,x)\equiv b(x)$ and $\sigma(t,x)\equiv 1$ and the time change $E$ is the inverse of an exponentially tempered stable subordinator.  Moreover, we will establish 
both strong and weak convergence of our approximation process to the exact solution of the above SDE with the respective orders of convergence specified, which is not investigated in \cite{GajdaMagdziarz} and hence serves as the main contribution of this paper.

The rest of the paper is organized as follows. Section \ref{section_preliminaries} precisely defines the class of SDEs to be considered in this paper and provides preliminary facts concerning such SDEs. Section \ref{section_scheme} establishes the main results of this paper; i.e.\ strong and weak convergence of our approximation scheme along with their respective orders.  Discussions are given with emphasis on analysis of two types of errors: one generated by the Euler--Maruyama scheme and the other ascribed to the approximation of the inverse subordinator. 
In particular, in Proposition \ref{Proposition_A(T)}, we derive an error estimate concerning strong convergence of the Euler--Maruyama scheme using a technique significantly different from the well-known method appearing in \cite{KloedenPlaten}. Namely, we utilize Burkholder's inequality to obtain a sharper error estimate, which is essential for the derivation of Theorems \ref{Theorem_main2} and \ref{Theorem_main3}; see item 3) of Remark \ref{Remark_main} for details on this issue. Section \ref{section_simulation} provides numerical examples that support the convergence results.

\section{Preliminaries}\label{section_preliminaries}

This section provides necessary backgrounds for SDEs driven by a time-changed Brownian motion. 
Throughout the paper, a complete probability space $(\Omega,\mathcal{F},\mathbb{P})$ with a filtration $(\mathcal{F}_t)_{t\ge 0}$ satisfying the usual conditions is fixed.

Let $D$ be an $(\mathcal{F}_t)$-adapted subordinator with Laplace exponent $\psi$ and L\'evy measure $\nu$; i.e.\ $D$ is a one-dimensional nondecreasing L\'evy process with c\`adl\`ag paths starting at 0 and Laplace transform 
\begin{align}\label{def_LaplaceExponent}
	\mathbb{E}[e^{-sD(t)}]=e^{-t\psi(s)}, \ \ \textrm{where} \ \ 
	\psi(s)=a s +\int_0^\infty (1-e^{-sx}) \nu(dx), \ \ s> 0,
\end{align}
with 
$a\ge 0$ and
$\int_0^\infty (x\wedge 1)  \nu(dx)<\infty$.
We assume that the L\'evy measure is infinite, i.e.\ $\nu(0,\infty)=\infty$, which implies that $D$ has strictly increasing paths with infinitely many jumps (see e.g.\ Theorem 21.3 of \cite{Sato}). 
Let $E$ be the inverse of $D$;
\begin{align}\label{inverse}
	E(t):=\inf\{u>0; D(u)>t\}, \ \ t\ge 0. 
\end{align}
Since $D$ has strictly increasing paths, the process $E$, called an \textit{inverse subordinator}, has continuous, nondecreasing paths. 
Moreover, $E$ is a continuous $(\mathcal{F}_t)$-time change (see e.g.\ Lemma 2.7 of \cite{Kobayashi}) and hence the time-changed filtration $(\mathcal{F}_{E(t)})_{t\ge 0}$ is well-defined.

Let $B$ be an $m$-dimensional $(\mathcal{F}_t)$-adapted Brownian motion starting at 0. The \textit{time-changed Brownian motion} $B\circ E$ is widely used to model subdiffusions, where particles spread more slowly than the classical Brownian motion particles do. In particular, the particles represented by $B\circ E$ are trapped and immobile during the constant periods of $E$.
Consider the SDE
\begin{align}\label{SDE_new}
	Y(t)=y_0+\int_0^t b(E(r),Y(r)) dE(r)+\int_0^t \sigma(E(r),Y(r)) dB(E(r)),
\end{align}
where $y_0\in\mathbb{R}^d$ is a non-random constant, and $b(t,x):[0,\infty)\times \mathbb{R}^d\to \mathbb{R}^d$ and $\sigma(t,x):[0,\infty)\times \mathbb{R}^d\to \mathbb{R}^{d\times m}$ are measurable functions for which there is a positive constant $K$ such that
\begin{align}
	\label{SDE_condition1} &|b(t,x)-b(t,y)|+|\sigma(t,x)-\sigma(t,y)|\le K|x-y|,\\ 
	\label{SDE_condition2} &|b(t,x)|+|\sigma(t,x)|\le K(1+|x|),\\
	\label{SDE_condition3} &|b(s,x)-b(t,x)|+|\sigma(s,x)-\sigma(t,x)|\le K(1+|x|)|s-t|^{\gamma} 
\end{align}  
for all $x,y\in\mathbb{R}^d$ and $s,t\ge 0$, where $\gamma$ is a fixed positive constant and $|\cdot|$ denotes the Euclidean norms of appropriate dimensions. 
 Here, the stochastic integrals appearing in SDE \eqref{SDE_new} are understood within the framework of stochastic integrals driven by semimartingales as the integrators $E$ and $B\circ E$ are both $(\mathcal{F}_{E(t)})$-semimartingales due to Corollary 10.12 of \cite{Jacod}. 
The initial value $y_0$ is taken to be a non-random constant only for simplicity of discussions; all the results appearing in this paper can be easily generalized with a random initial value satisfying appropriate conditions such as existence of moments.
 
Note that under conditions \eqref{SDE_condition1} and \eqref{SDE_condition2}, SDE \eqref{SDE_new} has a unique strong solution $Y$ on $[0,\infty)$ (see Theorem 7 in Chapter V of \cite{Protter}; also see Lemma 4.1 of \cite{Kobayashi}). 
The Kolmogorov-type equation associated with the solution $Y$ is known. In particular, if $D$ is a $\beta$-stable subordinator independent of the Brownian motion $B$, and if $b$ and $\sigma$ are autonomous coefficients satisfying some regularity conditions, then the function $u(t,x):=\mathbb{E}[\varphi(Y(t))|Y(0)=x]$, where $\varphi\in C^2_0(\mathbb{R}^d)$, satisfies the time-fractional Kolmogorov-type equation 
\[
	\partial_t^\beta u(t,x)=\mathcal{A}u(t,x)
\]
 with $\partial_t^\beta$ being the Caputo fractional derivative of order $\beta$ and $\mathcal{A}=b(x)\partial_x+\frac 12 \sigma^2(x)\partial_x^2$; see \cite{HKU-1} for this special case. General cases are treated in the recent papers \cite{MagdziarzSchilling,MagdziarzZorawik}.

Condition \eqref{SDE_condition3} necessarily holds for autonomous coefficients and is needed to obtain Proposition \ref{Proposition_A(T)},
which will be used to derive the main results in Theorems \ref{Theorem_main2} and \ref{Theorem_main3}.
It is also worth noting that $E$ and $B\circ E$ are non-Markovian and do not have independent or stationary increments (see \cite{MS_1}), which makes it difficult to simulate sample paths of the solution $Y$ to SDE \eqref{SDE_new} via direct applications of well-known approximation schemes such as the Euler--Maruyama scheme.

The duality principle in \cite{Kobayashi} reveals a deep connection between SDE \eqref{SDE_new} and the classical It\^o SDE 
\begin{align}\label{SDE_classical}
	X(t)=y_0+\int_0^t b(r,X(r)) dr+\int_0^t \sigma(r,X(r)) dB(r).  
\end{align}

\begin{lemma}[Duality principle {\cite[Theorem 4.2]{Kobayashi}}]\label{Lemma_duality}
Suppose that $b(t,x)$ and $\sigma(t,x)$ satisfy conditions \eqref{SDE_condition1} and \eqref{SDE_condition2}. 
If $Y$ is the unique solution to SDE \eqref{SDE_new}, then the time-changed process $X:=Y\circ D$ is an $(\mathcal{F}_{t})$-semimartingale which is the unique solution to SDE \eqref{SDE_classical}. 
On the other hand, if $X$ is the unique solution to SDE \eqref{SDE_classical}, then the time-changed process $Y:=X\circ E$ is an $(\mathcal{F}_{E(t)})$-semimartingale which is the unique solution to SDE \eqref{SDE_new}.  
\end{lemma}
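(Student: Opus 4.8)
The plan is to exploit the fact that $E$ is the inverse of the strictly increasing process $D$, so that $E(D(t))=t$ for all $t\ge 0$ almost surely, and to transport each SDE into the other by composing the unknown with $D$ or $E$ and then applying change-of-time formulas for both Lebesgue--Stieltjes and stochastic integrals. The two identities I would rely on are, for a suitable integrand $H$,
\begin{align*}
	\int_0^{D(t)} H(r)\,dE(r)=\int_0^t H(D(r))\,dr,\qquad \int_0^{D(t)} H(r)\,d(B\circ E)(r)=\int_0^t H(D(r))\,dB(r),
\end{align*}
together with their inverses obtained by composing with $E$ in place of $D$, namely $\int_0^{E(t)} H(r)\,dr=\int_0^t H(E(r))\,dE(r)$ and $\int_0^{E(t)} H(r)\,dB(r)=\int_0^t H(E(r))\,d(B\circ E)(r)$. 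The first identity is a substitution in a Stieltjes integral, valid because $E$ is continuous and is constant precisely on the jump intervals of $D$, where $dE$ charges no mass, while on the range of $D$ one has $E(D(u))=u$; the second is the change-of-time formula for stochastic integrals driven by the $(\mathcal{F}_{E(t)})$-semimartingale $B\circ E$, resting on $B\circ E\circ D=B$.

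For the forward direction, suppose $Y$ solves \eqref{SDE_new} and set $X:=Y\circ D$. Evaluating \eqref{SDE_new} at time $D(t)$ and applying the two displayed identities with $H(r)=b(E(r),Y(r))$ and $H(r)=\sigma(E(r),Y(r))$ respectively, the relations $E(D(r))=r$ and $Y(D(r))=X(r)$ turn the right-hand side into $y_0+\int_0^t b(r,X(r))\,dr+\int_0^t\sigma(r,X(r))\,dB(r)$, so that $X$ solves \eqref{SDE_classical}. I would then verify that $X$ is genuinely $(\mathcal{F}_t)$-adapted: since $D(t)$ is an $(\mathcal{F}_t)$-stopping time and $Y$ is $(\mathcal{F}_{E(\cdot)})$-adapted, an optional-sampling argument gives that $X(t)=Y(D(t))$ is $\mathcal{F}_{E(D(t))}=\mathcal{F}_t$-measurable. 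Because conditions \eqref{SDE_condition1} and \eqref{SDE_condition2} guarantee that \eqref{SDE_classical} has a unique strong solution, $X=Y\circ D$ must coincide with that solution.

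The reverse direction is symmetric: starting from a solution $X$ of \eqref{SDE_classical}, set $Y:=X\circ E$, evaluate \eqref{SDE_classical} at time $E(t)$, and apply the inverse change-of-time formulas. Using $X(E(r))=Y(r)$ this recovers \eqref{SDE_new}, and $Y$ is $(\mathcal{F}_{E(t)})$-adapted by construction; the uniqueness of the solution to \eqref{SDE_new} (via the semimartingale SDE theory already cited in the excerpt) then identifies $Y=X\circ E$ as that unique solution.

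The main obstacle is the rigorous justification of the stochastic change-of-time formula in both directions. One must first confirm that $B\circ E$ is an $(\mathcal{F}_{E(t)})$-semimartingale, indeed a local martingale, so that the integral $\int_0^\cdot \sigma(E(r),Y(r))\,d(B\circ E)(r)$ in \eqref{SDE_new} is well defined, and then establish that time-changing commutes with stochastic integration, i.e.\ that $\left(\int_0^\cdot H\,dB\right)\circ E=\int_0^\cdot (H\circ E)\,d(B\circ E)$ for predictable $H$, and likewise after composing with $D$. The delicate point is that, although $E\circ D=\mathrm{id}$, the composition $D\circ E$ fails to be the identity on the jump intervals of $D$, so one must track carefully how the quadratic variation and the predictable integrands transform under the time change and check that the transformed integrands remain adapted to the correct filtration. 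Once these commutation relations are secured, the remaining manipulations are routine substitutions, and the two uniqueness statements close the argument.
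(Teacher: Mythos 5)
The paper offers no proof of Lemma \ref{Lemma_duality} to compare against: the statement is imported verbatim as Theorem 4.2 of \cite{Kobayashi}, and the authors simply cite it. Your outline is, in substance, the proof given in that reference --- compose the unknown with $D$ (resp.\ $E$), use $E\circ D=\mathrm{id}$ (valid because the infinite L\'evy measure forces $D$ to be strictly increasing), transport each SDE into the other via the change-of-variables formulas for time-changed Lebesgue--Stieltjes and stochastic integrals, and close with strong uniqueness on each side. You also correctly identify the genuinely delicate points: the failure of $D\circ E=\mathrm{id}$ on the jump intervals of $D$, and the adaptedness bookkeeping for $Y\circ D$ with respect to $(\mathcal{F}_t)$ and for $X\circ E$ with respect to $(\mathcal{F}_{E(t)})$.

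The one substantive reservation is that the two change-of-variables identities you display are not auxiliary facts to be ``secured'' afterwards --- they \emph{are} the mathematical content of the duality principle. They correspond to the propositions in \cite{Kobayashi} built on Chapter 10 of \cite{Jacod}, and a complete argument must either cite them precisely or prove them; as written, your proposal names them as the main obstacle and then uses them, which leaves the write-up a sketch rather than a proof. Relatedly, the stochastic-integral identity $\int_0^{E(t)}H(r)\,dB(r)=\int_0^t H(E(r))\,d(B\circ E)(r)$ holds in this clean form only because the transported integrand $H\circ E$ is constant wherever $E$ is constant, i.e.\ is synchronized with the time change; this is exactly why SDE \eqref{SDE_new} carries coefficients of the special form $b(E(r),Y(r))$ and $\sigma(E(r),Y(r))$ rather than $b(r,Y(r))$, and why any solution $Y$ of \eqref{SDE_new} is itself constant on each interval $[D(u-),D(u)]$ (both integrators $E$ and $B\circ E$ freeze there). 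Your remark that ``$dE$ charges no mass'' on those intervals covers the Stieltjes integral but does not record this synchronization, which is what makes the composition with $D$ lossless and the two directions genuinely inverse to one another. With those points supplied (or with the reference's propositions cited), your argument is correct and coincides with the standard proof.
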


Note that the continuity of the sample paths of $E$ is necessary for the duality principle to hold 
(see Example 2.5 of \cite{Kobayashi}). 
Therefore, the results to be presented in this paper cannot be immediately extended to the case where the L\'evy measure of $D$ is finite (in which case the inverse $E$ has jumps and the duality principle no longer holds).

\section{An approximation scheme and pertinent results on convergence}\label{section_scheme}

Throughout the paper, we assume that the Brownian motion $B$ is independent of the subordinator $D$. Our discretization scheme for the solution $Y$ to SDE \eqref{SDE_new} on a fixed interval $[0,T]$ is two-fold --- to apply the Euler--Maruyama scheme to SDE \eqref{SDE_classical} to construct a process $X_\delta$ approximating the solution $X$ (see \eqref{def_Xdelta}--\eqref{def_interpolation} below), and to approximate the inverse subordinator $E$ by a process $E_\delta$ to be defined in \eqref{def_Spsidelta} (which was introduced in \cite{Magdziarz_simulation,Magdziarz_spa}). 
Here, $\delta\in(0,1)$ denotes the equidistant step size to be taken in the discretization scheme.
The duality principle (Lemma \ref{Lemma_duality}) suggests the use of
the composition $Y_\delta:=X_\delta\circ E_\delta$ as a process approximating the solution $Y$ of SDE \eqref{SDE_new}. However, to guarantee the reliability of our approximation scheme, we must carefully analyze two different errors: one generated by the Euler--Maruyama scheme and the other due to the approximation of the inverse subordinator. The first part of the this section is devoted to discussions of these errors.

\subsection{Improved error estimates for the Euler--Maruyama scheme}

In this subsection, we derive important error estimates concerning the Euler--Maruyama scheme; see Propositions \ref{Proposition_A(T)} and \ref{Proposition_weak2}. These estimates improve those given in Theorems 10.2.2 and 14.5.1 (with $\beta=1$) of \cite{KloedenPlaten}. In particular, a method to be used to derive Proposition \ref{Proposition_A(T)} is significantly different from the one employed in 
\cite{KloedenPlaten}. To obtain the improved error bound, we will utilize Burkholder's inequality.

For a fixed $\delta\in(0,1)$,
we apply the Euler--Maruyama scheme to SDE \eqref{SDE_classical} on the positive real line $[0,\infty)$ by choosing discretization times  
$\tau_n:=n\delta$, $n=0,1,2,\ldots$, with equal step size $\delta$, and then setting 
\begin{align}\label{def_Xdelta}
	X_\delta(0):= y_0, \ \ 
 	X_\delta(\tau_{n+1})
	:={ }&X_\delta(\tau_n)+b(\tau_n,X_\delta(\tau_n))(\tau_{n+1}-\tau_n)\\
		&+\sigma(\tau_n,X_\delta(\tau_n))(B(\tau_{n+1})-B(\tau_n))\notag
 \end{align}
 for $n=0,1,2,\ldots$. A continuous-time process $X_\delta=(X_\delta(t))_{t\ge 0}$ is defined by continuously interpolating the discrete-time process $(X_\delta(\tau_n))_{n=0,1,2,\ldots}$ by 
 \begin{align}\label{def_interpolation}
	X_\delta(t)
	:={}&X_\delta(\tau_n)+b(\tau_n,X_\delta(\tau_n))(t-\tau_n)\\
		&+\sigma(\tau_n,X_\delta(\tau_n))(B(t)-B(\tau_n))
	 \ \textrm{whenever} \ t\in[\tau_n,\tau_{n+1}].\notag
 \end{align}
 The interpolation is for a theoretical purpose only and 
the information of the interpolated values is not used for simulation of sample paths of the solution $Y$ of SDE \eqref{SDE_new} (see Section \ref{section_simulation} for details).  

It is known that the Euler approximation with $\gamma=1$ in condition \eqref{SDE_condition3} has the order of (uniform) strong convergence $0.5$. The exact statement is provided in the following lemma, which appears in \cite{KloedenPlaten}.

\begin{lemma}[{\cite[Theorem 10.2.2, Remark 10.2.3]{KloedenPlaten}}]\label{Lemma_Euler}
Let $X$ be the solution to SDE \eqref{SDE_classical} on a bounded interval $[0,T_\ast]$ satisfying conditions \eqref{SDE_condition1}, \eqref{SDE_condition2} and \eqref{SDE_condition3} with $\gamma=1$. 
For a fixed $\delta\in(0,1)$, let $X_\delta$ be the process defined in \eqref{def_Xdelta}--\eqref{def_interpolation} on $[0,T_\ast]$.
Then there exists a positive constant $A$ not depending on $\delta$ 
such that
\begin{align}\label{Estimate_Euler}
	\mathbb{E}\biggl[\sup_{0\le s\le t}|X(s)-X_\delta(s)|\biggr]
	\le A\delta^{1/2} \ \ \textrm{for all} \ \ t\in [0,T_\ast].
\end{align}
\end{lemma}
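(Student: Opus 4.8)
The plan is to control the second moment of the supremal error and then pass to the first moment by the Cauchy--Schwarz inequality, since a bound of order $\delta$ on $\mathbb{E}[\sup_{s\le t}|X(s)-X_\delta(s)|^2]$ immediately yields the desired $\delta^{1/2}$ bound on $\mathbb{E}[\sup_{s\le t}|X(s)-X_\delta(s)|]$. First I would record the two ingredients that make the squared error first order in $\delta$. The first is a uniform-in-$\delta$ moment bound $\sup_{\delta\in(0,1)}\sup_{0\le s\le T_\ast}\mathbb{E}[|X_\delta(s)|^2]<\infty$ (together with the analogous bound for $X$), obtained from the linear growth condition \eqref{SDE_condition2}, It\^o's isometry, and Gronwall's inequality. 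The second is a local increment estimate: writing $\eta(s):=\tau_n$ for $s\in[\tau_n,\tau_{n+1})$, the interpolation formula \eqref{def_interpolation} gives $X_\delta(s)-X_\delta(\eta(s))=b(\eta(s),X_\delta(\eta(s)))(s-\eta(s))+\sigma(\eta(s),X_\delta(\eta(s)))(B(s)-B(\eta(s)))$, so by \eqref{SDE_condition2} and the moment bound $\mathbb{E}[|X_\delta(s)-X_\delta(\eta(s))|^2]\le C\delta$ uniformly in $s$ (the drift contributes $O(\delta^2)$ and the diffusion $O(\delta)$).

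Next I would represent the error as a stochastic integral. Setting $Z(s):=X(s)-X_\delta(s)$ and subtracting the integral form of \eqref{def_interpolation} from \eqref{SDE_classical},
\begin{align*}
Z(s)=\int_0^s\bigl[b(r,X(r))-b(\eta(r),X_\delta(\eta(r)))\bigr]\,dr+\int_0^s\bigl[\sigma(r,X(r))-\sigma(\eta(r),X_\delta(\eta(r)))\bigr]\,dB(r).
\end{align*}
The crucial step is to split each integrand into a part controlled by the current error and a part of size $O(\delta^{1/2})$ in $L^2$. For the diffusion coefficient I would write
\begin{align*}
\sigma(r,X(r))-\sigma(\eta(r),X_\delta(\eta(r)))=\bigl[\sigma(r,X(r))-\sigma(r,X_\delta(r))\bigr]+\bigl[\sigma(r,X_\delta(r))-\sigma(\eta(r),X_\delta(\eta(r)))\bigr],
\end{align*}
and similarly for $b$. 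The first bracket is bounded by $K|Z(r)|$ via \eqref{SDE_condition1}; the second bracket, split further into a pure time increment and a pure spatial increment, is bounded using the time-regularity condition \eqref{SDE_condition3} with $\gamma=1$ on $|r-\eta(r)|\le\delta$ and the Lipschitz condition \eqref{SDE_condition1} on $|X_\delta(r)-X_\delta(\eta(r))|$, giving $K(1+|X_\delta(r)|)\delta+K|X_\delta(r)-X_\delta(\eta(r))|$, whose squared $L^2$-norm is $O(\delta)$ by the moment and local increment estimates.

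I would then take the supremum over $s\le t$ and expectation. For the drift integral I apply Cauchy--Schwarz in the form $\sup_{s\le t}\bigl|\int_0^s f(r)\,dr\bigr|^2\le T_\ast\int_0^t|f(r)|^2\,dr$; for the diffusion integral I apply Doob's $L^2$ maximal inequality followed by It\^o's isometry to get $\mathbb{E}[\sup_{s\le t}|\int_0^s g\,dB|^2]\le 4\int_0^t\mathbb{E}[|g(r)|^2]\,dr$. Combining these with the decomposition above produces the integral inequality $\mathbb{E}[\sup_{s\le t}|Z(s)|^2]\le C_1\delta+C_2\int_0^t\mathbb{E}[\sup_{u\le r}|Z(u)|^2]\,dr$, with $C_1,C_2$ depending only on $K$ and $T_\ast$. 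Gronwall's inequality then yields $\mathbb{E}[\sup_{s\le t}|Z(s)|^2]\le C_1 e^{C_2 T_\ast}\delta$, and taking square roots gives the claim with $A:=(C_1 e^{C_2 T_\ast})^{1/2}$.

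I expect the main obstacle to be arranging the diffusion term so that the supremum can be moved inside before invoking It\^o's isometry; this is precisely where Doob's maximal inequality enters, and it is the point at which a cruder argument would lose the sharp order. Care is also required to keep every constant independent of $\delta$, which relies on the uniform moment bound and on the fact that the step count $\lfloor T_\ast/\delta\rfloor$ never enters the estimates directly, being absorbed into the time integrals rather than summed. (I note that the sharper Burkholder-based technique advertised for Proposition \ref{Proposition_A(T)} is not needed here, where the standard Doob--It\^o route already delivers the stated order $\delta^{1/2}$.)
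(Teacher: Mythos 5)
Your proof is correct. Note, however, that the paper does not actually prove Lemma \ref{Lemma_Euler}: it is quoted from Theorem 10.2.2 of Kloeden--Platen, and your argument --- bound $\mathbb{E}[\sup_{s\le t}|X(s)-X_\delta(s)|^2]$ by splitting each integrand into a Lipschitz-in-space part of size $K|Z(r)|$ plus an $O(\delta^{1/2})$-in-$L^2$ remainder, control the drift integral by Cauchy--Schwarz and the diffusion integral by Doob's $L^2$ maximal inequality plus It\^o's isometry, and close with Gronwall --- is essentially that standard proof, and it indeed delivers the stronger $L^2$ conclusion from which \eqref{Estimate_Euler} follows by Cauchy--Schwarz. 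The instructive comparison is with the proof the paper does give, namely that of Proposition \ref{Proposition_A(T)}. Your Cauchy--Schwarz step for the drift term, $\sup_{s\le t}\bigl|\int_0^s f\,dr\bigr|^2\le t\int_0^t|f(r)|^2\,dr$, inserts a factor of $t$ in front of the Gronwall integral; on a bounded horizon this is harmlessly absorbed into $T_\ast$, but it is exactly what forces the constant $A$ to depend on $T_\ast$ and, on $[0,\infty)$, would produce the growth $e^{C(t^2+t)}$ described in item 2) of Remark \ref{Remark_Euler}. The paper instead applies It\^o's formula to the squared error and estimates the martingale term with Burkholder's inequality in $L^1$ rather than Doob's inequality in $L^2$, which removes the extra factor of $t$ and yields the horizon-uniform bound $Ce^{Ct}\delta^{\min(2\gamma,1)}$ required for Theorems \ref{Theorem_main2} and \ref{Theorem_main3}; your closing parenthetical shows you are aware that this sharpening is not needed for the bounded-interval lemma. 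The only housekeeping point: to invoke Gronwall you should note that $t\mapsto\mathbb{E}[\sup_{s\le t}|Z(s)|^2]$ is finite on $[0,T_\ast]$, which requires the supremum moment bounds $\mathbb{E}[\sup_{s\le T_\ast}|X(s)|^2]<\infty$ and $\mathbb{E}[\sup_{s\le T_\ast}|X_\delta(s)|^2]<\infty$ (again via Doob), slightly stronger than the pointwise-in-$s$ bounds you record.
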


Note that we must assume condition \eqref{SDE_condition3} with $\gamma=1$ here, which is not needed to simply guarantee the existence of a unique strong solution $X$ to SDE \eqref{SDE_classical}. The proof of this lemma provided in \cite{KloedenPlaten} allows the constant $A$ in \eqref{Estimate_Euler} to depend on the time horizon $T_\ast$. However, to obtain the main results of this paper, we need to refine the above statement in such a way that the processes $X$ and $X_\delta$ are defined on the positive real line $[0,\infty)$ (rather than on any bounded interval $[0,T_\ast]$) and that $A$ in \eqref{Estimate_Euler} depends on $t$ (rather than on any fixed time horizon $T_\ast$).
More precisely, the following improved version of Lemma \ref{Lemma_Euler} will be required.

\begin{proposition}\label{Proposition_A(T)}
Let $X$ be the solution to SDE \eqref{SDE_classical} on the positive real line $[0,\infty)$ satisfying conditions \eqref{SDE_condition1}, \eqref{SDE_condition2} and \eqref{SDE_condition3}. 
For a fixed $\delta\in(0,1)$, let $X_\delta$ be the process on $[0,\infty)$ defined in \eqref{def_Xdelta}--\eqref{def_interpolation}. 
Then there exists a positive constant $C$ not depending on $\delta$ or $t$ such that
\begin{align}\label{Estimate_Euler2}
	\mathbb{E}\biggl[\sup_{0\le s\le t}|X(s)-X_\delta(s)|^2\biggr] 
	\le Ce^{Ct}\delta^{\min(2\gamma,1)} \ \ \textrm{for all} \ \ t\ge 0.
\end{align}
\end{proposition}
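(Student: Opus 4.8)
The plan is to prove \eqref{Estimate_Euler2} in two stages: first control the pointwise second moment $\psi(t):=\mathbb{E}[|X(t)-X_\delta(t)|^2]$, and then upgrade to the second moment of the supremum $g(t):=\mathbb{E}[\sup_{0\le s\le t}|X(s)-X_\delta(s)|^2]$. Writing $\eta(r):=\tau_n$ for $r\in[\tau_n,\tau_{n+1})$, the interpolation \eqref{def_interpolation} shows that $X_\delta$ satisfies $dX_\delta(r)=b(\eta(r),X_\delta(\eta(r)))\,dr+\sigma(\eta(r),X_\delta(\eta(r)))\,dB(r)$, so the error $X-X_\delta$ is a continuous semimartingale with drift $\beta(r):=b(r,X(r))-b(\eta(r),X_\delta(\eta(r)))$ and diffusion $\Sigma(r):=\sigma(r,X(r))-\sigma(\eta(r),X_\delta(\eta(r)))$. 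Before either stage I would record two a priori bounds, obtained by the same Gronwall technique (or quoted from the standard moment estimates for Euler schemes): a uniform-in-$\delta$ moment bound $\sup_{0\le r\le t}\mathbb{E}[|X_\delta(r)|^2]\le Ce^{Ct}$, and a one-step increment bound $\mathbb{E}[|X_\delta(r)-X_\delta(\eta(r))|^2]\le C\delta\,e^{Cr}$, the latter following directly from \eqref{def_interpolation}, condition \eqref{SDE_condition2} and the moment bound, since $|r-\eta(r)|\le\delta$ and the Brownian increment over $[\eta(r),r]$ contributes a factor $r-\eta(r)\le\delta$.

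For the first stage I would apply It\^o's formula to $|X-X_\delta|^2$ and take expectations; the stochastic integral has mean zero (after a routine localization justified by the moment bounds), leaving $\psi(t)=2\int_0^t\mathbb{E}[(X-X_\delta)\cdot\beta(r)]\,dr+\int_0^t\mathbb{E}[|\Sigma(r)|^2]\,dr$. The point of using the inner-product form here, rather than squaring $\int_0^t\beta\,dr$, is that it avoids the Cauchy--Schwarz factor of $t$ in front of the drift contribution, which is exactly what would otherwise degrade the bound from $e^{Ct}$ to $e^{Ct^2}$. I would split both $\beta$ and $\Sigma$ into three pieces using the intermediate values $b(r,X_\delta(r))$ and $b(\eta(r),X_\delta(r))$ (and similarly for $\sigma$): a spatial-Lipschitz piece controlled via \eqref{SDE_condition1} by $\psi(r)$, a temporal-H\"older piece controlled via \eqref{SDE_condition3} by $(1+|X_\delta(r)|)^2\delta^{2\gamma}$, and a second spatial-Lipschitz piece controlled by $|X_\delta(r)-X_\delta(\eta(r))|^2$. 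Applying $2ab\le a^2+b^2$ to the cross terms and inserting the two a priori bounds, the non-feedback pieces integrate to at most $Ce^{Ct}\delta^{\min(2\gamma,1)}$ (the exponents $2\gamma$ and $1$ coming from the H\"older and the increment pieces, whence the minimum), giving $\psi(t)\le C_1\int_0^t\psi(r)\,dr+C_2 e^{Ct}\delta^{\min(2\gamma,1)}$. Gronwall's inequality with the constant coefficient $C_1$ then yields $\psi(t)\le Ce^{Ct}\delta^{\min(2\gamma,1)}$.

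For the second stage I would start from $\sup_{0\le s\le t}|X(s)-X_\delta(s)|^2\le 2\sup_{0\le s\le t}\bigl|\int_0^s\beta\,dr\bigr|^2+2\sup_{0\le s\le t}\bigl|\int_0^s\Sigma\,dB\bigr|^2$, bound the drift supremum by $t\int_0^t|\beta|^2\,dr$ via Cauchy--Schwarz, and bound the diffusion supremum via Burkholder's inequality by $C\int_0^t|\Sigma|^2\,dr$. Taking expectations and reusing the same three-piece splittings, every occurrence of $|X-X_\delta|^2$ is now estimated by the \emph{already-proven} pointwise bound $\psi(r)\le Ce^{Cr}\delta^{\min(2\gamma,1)}$ rather than by an unknown feedback term; the factor $t$ in front of the drift therefore multiplies an already-controlled integral, and since $te^{Ct}\le e^{(C+1)t}$ it is harmlessly absorbed into the exponential. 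Collecting the contributions gives $g(t)\le Ce^{Ct}\delta^{\min(2\gamma,1)}$, as claimed.

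I would expect the main obstacle to be exactly the $t$-dependence of the constant. A direct Gronwall argument applied to $g(t)$ fails: the Cauchy--Schwarz bound for the drift forces a factor of $t$ onto the Gronwall feedback integral, producing $e^{Ct^2}$ and hence a constant that genuinely depends on the horizon, which is presumably why the estimate in \cite{KloedenPlaten} is stated with a constant depending on $T_\ast$. The device that resolves this is the separation into the two stages above: the It\^o inner-product computation keeps the drift's contribution to the feedback linear, so the pointwise bound enjoys a clean $e^{Ct}$, and Burkholder's inequality then lets the supremum be controlled without reintroducing any feedback, the residual $t$ from the drift being absorbed into the exponential. The rest is careful but routine bookkeeping of constants and verification that the localization in the It\^o step is legitimate.
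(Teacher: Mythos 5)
Your argument is correct, but it follows a genuinely different route from the paper's proof. The paper runs a \emph{single} Gronwall argument directly on the supremum quantity $Z(t)=\mathbb{E}[\sup_{0\le s\le t}|X(s)-X_\delta(s)|^2]$: after applying the It\^o formula to $\tilde X^2$ with $\tilde X=X-X_\delta$, it controls the martingale term $\mathbb{E}[\sup_s|\int_0^s\tilde X\,\Sigma\,dB|]$ by Burkholder's inequality in the $p=1$ form and then absorbs the resulting $\tfrac12 Z(t)$ into the left-hand side via the elementary inequality $(xy)^{1/2}\le x/(2C)+2Cy$; this keeps the feedback integral $C\int_0^t Z(r)\,dr$ free of any factor of $t$, so the classical Gronwall lemma applies directly to $Z$. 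You instead split the work into two stages, first proving the pointwise bound $\psi(t)=\mathbb{E}[|\tilde X(t)|^2]\le Ce^{Ct}\delta^{\min(2\gamma,1)}$ by Gronwall, and only then passing to the supremum, where the Cauchy--Schwarz factor of $t$ on the drift is harmless because it no longer multiplies an unknown feedback term and $te^{Ct}\le e^{(C+1)t}$. Both devices achieve the same essential goal --- preventing the horizon-dependent factor that would turn $e^{Ct}$ into $e^{C(t^2+t)}$ --- and both correctly identify the It\^o inner-product form of the drift contribution as the place where the naive argument of \cite{KloedenPlaten} loses. Two smaller differences are worth noting. First, your three-way splitting of $\beta$ and $\Sigma$ passes through the intermediate values $b(r,X_\delta(r))$ and $b(\eta(r),X_\delta(r))$, so you need uniform-in-$\delta$ moment and one-step-increment bounds for the Euler process $X_\delta$ itself; the paper instead splits through $\sigma(\tau_{n_r},X(r))$ and $\sigma(\tau_{n_r},X(\tau_{n_r}))$, using the moment estimate \eqref{moment_estimate} and the modulus-of-continuity bound of Lemma \ref{Lemma_modulus} for the exact solution $X$, which is marginally more economical since those facts are already needed elsewhere. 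Second, your two-stage structure lets you get away with the $L^2$ maximal inequality (Doob or BDG with $p=2$) in the supremum step, whereas the paper's one-stage argument genuinely needs the $p=1$ Burkholder bound together with the absorption trick. Your proposal is complete modulo the routine verifications you flag (the localization justifying that the stochastic integral is a true martingale, and the standard Gronwall-type derivation of the a priori bounds on $X_\delta$), and it delivers the same exponent $\min(2\gamma,1)$ and the same $Ce^{Ct}$ growth.
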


To prove Proposition \ref{Proposition_A(T)}, we will need the following simple lemma, which will be employed to derive Theorem \ref{Theorem_main2} as well.

\begin{lemma}\label{Lemma_modulus}
Let $X$ be the solution to SDE \eqref{SDE_classical} on $[0,\infty)$ satisfying conditions \eqref{SDE_condition1} and \eqref{SDE_condition2}.
Then for any $\delta\in(0,1)$ and any two time points $s$ and $t$ with $0\le t-s\le \delta$, the inequality 
\[
	\mathbb{E}[|X(t)-X(s)|^2]\le \delta Ce^{Ct}
\]
 holds, 
where $C$ is a constant not depending on $\delta$ or $t$.
\end{lemma}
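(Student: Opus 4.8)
The plan is to split the increment $X(t)-X(s)$ into its drift and diffusion parts over $[s,t]$, estimate each separately, and reduce everything to an a priori second-moment bound for the solution. First I would use \eqref{SDE_classical} to write
\[
	X(t)-X(s)=\int_s^t b(r,X(r))\,dr+\int_s^t \sigma(r,X(r))\,dB(r),
\]
so that $|X(t)-X(s)|^2\le 2|I_1|^2+2|I_2|^2$, where $I_1,I_2$ denote the two integrals. For the drift $I_1$, the Cauchy--Schwarz inequality yields a factor $t-s\le\delta$, while for the diffusion $I_2$ the It\^o isometry turns $\mathbb{E}[|I_2|^2]$ into $\mathbb{E}\!\int_s^t|\sigma(r,X(r))|^2\,dr$, an integral over an interval of length at most $\delta$. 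Applying the linear-growth bound \eqref{SDE_condition2} to both $b$ and $\sigma$ and using $\delta<1$, both contributions are dominated, up to a constant depending only on $K$, by $\delta\sup_{0\le r\le t}\bigl(1+\mathbb{E}[|X(r)|^2]\bigr)$.

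It then remains to bound $\sup_{0\le r\le t}\mathbb{E}[|X(r)|^2]$ by $Ce^{Ct}$ with $C$ independent of $t$, and this is the step I expect to be the main obstacle. The crucial subtlety is to secure the \emph{linear} exponent $e^{Ct}$ rather than a quadratic one: estimating the drift integral over $[0,t]$ by Cauchy--Schwarz introduces a spurious factor $t$ and, after Gronwall, yields only $e^{Ct^2}$. To avoid this I would instead apply It\^o's formula to $|X(r)|^2$, using a localizing sequence of stopping times so that the resulting stochastic integral has zero expectation. This replaces the drift term by $\mathbb{E}[2X(r)\cdot b(r,X(r))]$ and the quadratic-variation term by $\mathbb{E}[|\sigma(r,X(r))|^2]$. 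Bounding $2X\cdot b\le K+3K|X|^2$ and $|\sigma|^2\le 2K^2(1+|X|^2)$ via \eqref{SDE_condition2}, one obtains for $\phi(r):=\mathbb{E}[|X(r)|^2]$ the linear differential inequality $\phi'(r)\le c_1+c_2\phi(r)$, whose Gronwall solution is precisely of the form $Ce^{Ct}$ with $C$ depending only on $K$ and $|y_0|$ (in particular not on $t$).

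Combining the two pieces gives $\mathbb{E}[|X(t)-X(s)|^2]\le \delta\,Ce^{Ct}$, as claimed. Once the moment bound with uniform constant is in hand the increment estimate is routine; the entire difficulty lies in producing the correct linear growth in the exponent. As an alternative, one may simply quote $\sup_{0\le r\le t}\mathbb{E}[|X(r)|^2]\le Ce^{Ct}$ as a standard a priori estimate for It\^o SDEs under \eqref{SDE_condition1}--\eqref{SDE_condition2}, in which case only the first paragraph's decomposition needs to be carried out.
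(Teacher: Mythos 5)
Your proposal is correct and follows essentially the same route as the paper: decompose the increment via \eqref{SDE_classical}, apply Cauchy--Schwarz to the drift and the It\^o isometry to the diffusion, invoke \eqref{SDE_condition2}, and reduce to the a priori bound $\mathbb{E}[|X(r)|^{2}]\le Ce^{Cr}$. The paper simply cites that moment bound (Theorem 4.5.4 of Kloeden--Platen, stated as \eqref{moment_estimate}) rather than reproving it, but your observation that the It\^o-formula argument is needed to secure the linear exponent $e^{Ct}$ (as opposed to $e^{Ct^2}$) is accurate and consistent with the paper's emphasis elsewhere on avoiding quadratic exponents.
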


\begin{proof}
For notational simplicity, we give a proof only in the case when $d=m=1$; a multidimensional generalization is straightforward. 
For $s$ and $t$ such that $0\le t-s\le \delta (<1)$, it follows from the integral representation \eqref{SDE_classical}, the inequality $(x+y)^2\le 2x^2+2y^2$, and the Cauchy--Schwartz inequality that
\begin{align*}
	\mathbb{E}[|X(t)-X(s)|^2]
	&\le 2\mathbb{E}\biggl[(t-s)\int_s^t |b(r,X(r))|^2 dr\biggr]
		+2\mathbb{E}\biggl[\int_s^t |\sigma(r,X(r))|^2 dr\biggr],\notag
\end{align*}
which is dominated by $8K^2\int_s^t (1+\mathbb{E}[|X(r)|^2])  dr$ due to condition \eqref{SDE_condition2}. 
Since the initial value for SDE \eqref{SDE_classical} is assumed non-random, for each $\ell\in \mathbb{N}$, there exists a constant $C$ depending on $\ell$ and $K$ but not on $r$ such that
\begin{align}\label{moment_estimate}
	\mathbb{E}[|X(r)|^{2\ell}]\le Ce^{Cr} \ \ \textrm{for all} \ \ r\ge 0;
\end{align}
see e.g.\ Theorem 4.5.4 of \cite{KloedenPlaten}. Using this estimate with $\ell=1$, we obtain 
\[
	\mathbb{E}[|X(t)-X(s)|^2]
	\le 8K^2(1+Ce^{C t})\delta, 
\]
which completes the proof. 
\end{proof}

\begin{proof}[Proof of Proposition {\ref{Proposition_A(T)}}]
Again, for notational simplicity, we provide a proof only in the case when $d=m=1$. 
The definition of the process $X_\delta$ in \eqref{def_Xdelta}--\eqref{def_interpolation} implies that 
\begin{align}\label{def_Xdelta_2}
	X_\delta(t)=y_0+\int_0^t b(\tau_{n_r},X_\delta(\tau_{n_r})) dr+\int_0^t \sigma(\tau_{n_r},X_\delta(\tau_{n_r})) dB(r), \ \ t\ge 0,
\end{align}
where $n_r:=\max\{n=\{0\}\cup \mathbb{N}; \tau_n\le r\}$ so that $\tau_{n_r}=\tau_n$ whenever $r\in[\tau_n,\tau_{n+1}]$.  Let $\tilde{X}(t):=X(t)-X_\delta(t)$ and let $[\tilde{X},\tilde{X}]$ denote the quadratic variation process of $\tilde{X}$ (see e.g.\ \cite{Protter}). Since 
\[
	\tilde{X}^2(s)=2\int_0^s \tilde{X}(r) d\tilde{X}(r)+[\tilde{X},\tilde{X}](r),
\]
 the integral  representations \eqref{SDE_classical} and \eqref{def_Xdelta_2} yield  
\begin{align*}
	\tilde{X}^2(s)
	={ }&2\int_0^s \tilde{X}(r)[b(r,X(r)-b(\tau_{n_r},X_\delta(\tau_{n_r}))] dr\\
	&+\int_0^s \tilde{X}(r)[\sigma(r,X(r))-\sigma(\tau_{n_r},X_\delta(\tau_{n_r}))] dB(r)\\
	&+\int_0^s [\sigma(r,X(r))-\sigma(\tau_{n_r},X_\delta(\tau_{n_r}))]^2 dr.
\end{align*}
To deal with the three terms separately, write 
\[
	Z(t):=\mathbb{E}[\sup_{0\le s\le t}\tilde{X}^2(s)]=I_1(t)+I_2(t)+I_3(t),
\]
with $I_i(t)$ denoting the expectation of the supremum over $s\in[0,t]$ of the $i$th term. 

As for the term $I_3(t)$, first note that conditions \eqref{SDE_condition1} and \eqref{SDE_condition3} and the trivial fact that $0\le r-\tau_{n_r}\le \delta$ imply that
\begin{align}\label{estimate_sigma}
	&|\sigma(r,X(r))-\sigma(\tau_{n_r},X_\delta(\tau_{n_r}))|\\
	&\le |\sigma(r,X(r))-\sigma(\tau_{n_r},X(r))|
		+|\sigma(\tau_{n_r},X(r))-\sigma(\tau_{n_r},X(\tau_{n_r}))|\notag\\
	&\ \ \ +|\sigma(\tau_{n_r},X(\tau_{n_r}))-\sigma(\tau_{n_r},X_\delta(\tau_{n_r}))|\notag\\
	&\le K(1+|X(r)|)\delta^{\gamma}+K|X(r)-X(\tau_{n_r})|+K|\tilde{X}(\tau_{n_r})|.\notag
\end{align}
Hence, by the inequality $(x+y+z)^2\le 3(x^2+y^2+z^2)$, 
the estimate \eqref{moment_estimate} with $\ell=1$,
Lemma \ref{Lemma_modulus}, and the fact that $0\le r-\tau_{n_r}\le \delta$, it follows that
\begin{align*}
	I_3(t)
	&= \mathbb{E}\biggl[\sup_{0\le s\le t}
		\int_0^s [\sigma(r,X(r))-\sigma(\tau_{n_r},X_\delta(\tau_{n_r}))]^2 dr\biggr]\\
	&\le 3K^2 \mathbb{E}\biggl[
		\int_0^t \bigl\{2\delta^{2\gamma}(1+X^2(r))+|X(r)-X(\tau_{n_r})|^2+\tilde{X}^2(\tau_{n_r})\bigr\} dr\biggr]\\
	&\le 3K^2 \int_0^t \bigl\{2\delta^{2\gamma}(1+ C e^{Cr})+\delta C e^{Cr}+Z(r)\bigl\} dr,
\end{align*}
where $C$ represents a generic positive constant depending only on $K$ and $y_0$ (but not on $t$ or $\delta$), \textit{the value of which may change from line to line throughout the proof}. Hence, using the fact that any polynomials in $t$ are dominated above by functions of the form $Ce^{Ct}$, we obtain $I_3(t)\le \delta^{\min(2\gamma,1)} Ce^{Ct}+C\int_0^t Z(r)  dr$.

To derive an estimate for the term $I_1(t)$, use the trivial inequality $2xy\le x^2+y^2$ to observe that 
\begin{align*}
	I_1(t)
	&\le \mathbb{E}\biggl[
		\int_0^t 2|\tilde{X}(r)||b(r,X(r)-b(\tau_{n_r},X_\delta(\tau_{n_r}))| dr
\biggr]\\
	&\le \mathbb{E}\biggl[\int_0^t \bigl\{\tilde{X}^2(r)+[b(r,X(r)-b(\tau_{n_r},X_\delta(\tau_{n_r}))]^2\bigl\} dr\biggr]\\
	&\le \int_0^t Z(r) dr+\mathbb{E}\biggl[\int_0^t [b(r,X(r)-b(\tau_{n_r},X_\delta(\tau_{n_r}))]^2 dr\biggr].
\end{align*}
An upper bound for the second term of the last line is easily obtained in a manner similar to the estimation of $I_3(t)$ above; consequently, an estimate of the form  $I_1(t)\le \delta^{\min(2\gamma,1)} Ce^{Ct}+C\int_0^t Z(r)  dr$ again follows.

The term $I_2(t)$ can be estimated with the help of Burkholder's inequality (see e.g.\ Theorem 48 of Chapter IV of \cite{Protter}) as
\begin{align*}
	I_2(t)
	&\le \mathbb{E}\biggl[\sup_{0\le s\le t}
		\biggl|\int_0^s \tilde{X}(r)[\sigma(r,X(r)-\sigma(\tau_{n_r},X_\delta(\tau_{n_r}))] dB(r)\biggr|\biggr]\\
	&\le C \mathbb{E}\biggl[
		\biggl(\int_0^t\tilde{X}^2(r)[\sigma(r,X(r)-\sigma(\tau_{n_r},X_\delta(\tau_{n_r}))]^2 dr\biggr)^{1/2}\biggr]\\
	&\le C \mathbb{E}\biggl[
		\biggl(\sup_{0\le s\le t}\tilde{X}^2(s)\int_0^t[\sigma(r,X(r)-\sigma(\tau_{n_r},X_\delta(\tau_{n_r}))]^2 dr\biggr)^{1/2}\biggr]\\
	&\le \dfrac 12 Z(t)+2C^2\mathbb{E}\biggl[\int_0^t[\sigma(r,X(r)-\sigma(\tau_{n_r},X_\delta(\tau_{n_r}))]^2 dr\biggr],
\end{align*}
where we used the inequality $(xy)^{1/2}\le x/(2C)+2Cy$ for $x,y\ge 0$. This upper bound for $I_2(t)$ and the estimation of $I_3(t)$ above together imply that $I_2(t)\le Z(t)/2+\delta^{\min(2\gamma,1)} Ce^{Ct}+C\int_0^t Z(r)  dr$.

Now, the three estimated terms combined, we obtain 
\begin{align}\label{estimate_Gronwall}
	Z(t)\le \delta^{\min(2\gamma,1)} Ce^{Ct}+C\int_0^t Z(r)  dr, \ \ t\ge 0.
\end{align}
This, together with the classical result on Gronwall's inequality (see e.g.\ Theorem 2 on p.353 of \cite{Mitrinovic}), yields an  estimate of the form \eqref{Estimate_Euler2}. 
\end{proof}

\begin{remark}\label{Remark_Euler}
\begin{em}
1) Although the statement of Theorem 10.2.2 of \cite{KloedenPlaten} (equivalent to Lemma  \ref{Lemma_Euler} in this paper) requires condition \eqref{SDE_condition3} (with $\gamma=1$) for \textit{all} $s,t\ge 0$, 
the estimate in \eqref{estimate_sigma} shows that, in order for Proposition \ref{Proposition_A(T)} to hold for all $\delta\in(0,\delta_0)$ for a fixed $\delta_0\in (0,1]$, it is sufficient to assume condition \eqref{SDE_condition3} 
for $s,t\ge 0$ satisfying $|s-t|\le \delta_0$.  
This observation extends the class of coefficients of SDE \eqref{SDE_new} to which our approximation scheme applies.

2) Proposition \ref{Proposition_A(T)} is \textit{not} a consequence of a simple modification of the proof of Lemma \ref{Lemma_Euler} found in \cite{KloedenPlaten}; if we followed that proof, then the upper bound in the estimate \eqref{Estimate_Euler2} would take the form 
$Ce^{C(t^2+t)}\delta^{\min(2\gamma,1)}$,
which clearly grows faster as $t\to\infty$ than the bound we derived in the above proof. 
We would obtain such a rough estimate since
the inequality \eqref{estimate_Gronwall} would be replaced by
\[
	Z(t)\le \delta^{\min(2\gamma,1)} C e^{Ct}+C(t+1)\int_0^t Z(s)  ds.
\]
(For details about the derivation of this inequality, see Remark 5 of the first version of this paper \cite{JumKobayashi}.) 
The presence of $t+1$, which cannot be dominated by a constant since we do not impose a time horizon for the values of $t$, prevents the use of the classical result on Gronwall's inequality.  We could instead apply a generalized version of Gronwall's inequality in Theorem 1 on p.356 of \cite{Mitrinovic} (also see Remark 2 on p.357); however, this would yield the larger upper bound 
\[
	Z(t)
	\le \delta^{\min(2\gamma,1)} C e^{Ct}+C(t+1)\int_0^t \delta^{\min(2\gamma,1)} C e^{Cs} e^{\int_s^t C(r+1)  dr}  ds
	\le \delta^{\min(2\gamma,1)} C e^{C(t^2+t)}. 
\]
We emphasize here that the sharper bound in \eqref{Estimate_Euler2} is essential for the establishment of Theorems \ref{Theorem_main2} and \ref{Theorem_main3}; see item 3) of Remark \ref{Remark_main}.
\end{em}
\end{remark}

The next two results form the main components of the proof of Theorem \ref{Theorem_main4}, which concerns weak convergence of our approximation scheme.

\begin{lemma}\label{Lemma_weak1}
Let $X$ be the solution to SDE \eqref{SDE_classical} on the positive real line $[0,\infty)$ satisfying conditions \eqref{SDE_condition1} and \eqref{SDE_condition2}. 
Let $g\in C^2(\mathbb{R}^d)$ have derivatives of polynomial growth. 
Then for any $\delta\in(0,1)$ and any two time points $s$ and $t$ such that $0\le t-s\le \delta$, the inequality
\[
	|\mathbb{E}[g(X(t))-g(X(s))]|\le \delta C e^{Ct}
\]
holds, 
where $C$ is a constant not depending on $\delta$ or $t$. 
\end{lemma}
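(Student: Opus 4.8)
The plan is to apply It\^o's formula to $g(X(t))-g(X(s))$ and then take expectations, exploiting the fact that the martingale (stochastic integral) term has zero mean. This is precisely where the weak estimate gains a full power of $\delta$ over the strong $L^2$ estimate of Lemma \ref{Lemma_modulus}: in the weak sense only the Lebesgue (``drift-type'') integrals survive, and each such integral over an interval of length at most $\delta$ contributes a factor $\delta$. Working in the case $d=m=1$ for notational simplicity, as before, I would write, via It\^o's formula applied to the semimartingale $X$ solving \eqref{SDE_classical},
\[
g(X(t))-g(X(s))=\int_s^t g'(X(r))b(r,X(r))\,dr+\int_s^t g'(X(r))\sigma(r,X(r))\,dB(r)+\frac12\int_s^t g''(X(r))\sigma^2(r,X(r))\,dr.
\]

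Next, taking expectations, I would argue that the stochastic integral term has zero expectation. For this I must check that it is a genuine martingale rather than merely a local martingale, i.e.\ that $\mathbb{E}[\int_s^t (g'(X(r))\sigma(r,X(r)))^2\,dr]<\infty$. This follows from the polynomial growth of $g'$, the linear growth condition \eqref{SDE_condition2} on $\sigma$, and the moment estimate \eqref{moment_estimate}: choosing $\ell$ large enough that the combined polynomial degree is dominated by $2\ell$, the integrand has finite expectation, bounded by $Ce^{Cr}$ on $[s,t]$. Hence
\[
\mathbb{E}[g(X(t))-g(X(s))]=\mathbb{E}\biggl[\int_s^t g'(X(r))b(r,X(r))\,dr\biggr]+\frac12\mathbb{E}\biggl[\int_s^t g''(X(r))\sigma^2(r,X(r))\,dr\biggr].
\]

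It then remains to bound the two surviving integrals. Since $g'$ and $g''$ have polynomial growth, say $|g'(x)|+|g''(x)|\le L(1+|x|^p)$, combining with \eqref{SDE_condition2} gives pointwise bounds of the form $|g'(X(r))b(r,X(r))|\le C(1+|X(r)|^{p+1})$ and $|g''(X(r))\sigma^2(r,X(r))|\le C(1+|X(r)|^{p+2})$. Applying \eqref{moment_estimate} with $\ell$ chosen so that $2\ell\ge p+2$ (and using that lower-order moments are controlled by higher-order ones), each integrand has expectation at most $Ce^{Cr}$. Integrating over $[s,t]$ and using $t-s\le\delta$ together with the monotonicity of $r\mapsto e^{Cr}$ yields
\[
|\mathbb{E}[g(X(t))-g(X(s))]|\le \int_s^t Ce^{Cr}\,dr\le (t-s)Ce^{Ct}\le \delta Ce^{Ct},
\]
as claimed.

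I expect the only genuinely technical point to be the verification that the stochastic integral is a true martingale with vanishing expectation; the remainder is a routine application of the growth hypotheses and the moment bound \eqref{moment_estimate}. The multidimensional case follows by the same argument carried out component-wise, with the gradient and Hessian of $g$ replacing $g'$ and $g''$ and the appropriate second-order term involving $\sigma$ replacing $\sigma^2$.
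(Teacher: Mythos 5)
Your proposal is correct and follows essentially the same route as the paper: apply It\^o's formula, kill the stochastic integral in expectation, and bound the surviving Lebesgue integrals via the polynomial growth of $g'$, $g''$, the linear growth condition \eqref{SDE_condition2}, and the moment estimate \eqref{moment_estimate}. The only difference is that you make explicit the (routine) verification that the stochastic integral is a true martingale, which the paper leaves implicit.
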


\begin{proof}
For simplicity, we give a proof only in the case $d=m=1$. 
By the It\^o formula, we obtain 
\[
	|\mathbb{E}[g(X(t))-g(X(s))]|
	\le \int_s^t \mathbb{E}[| g'(X(r))b(r,X(r))+\ g''(X(r))\sigma^2(r,X(r))/2|] dr.
\]
By condition \eqref{SDE_condition2} and the assumption that the derivatives of $g$ have polynomial growth, the quantity inside the expectation on the right hand side is dominated by a polynomial of $X(r)$. The desired upper bound now follows upon using the estimate \eqref{moment_estimate}.
\end{proof}

\begin{proposition}\label{Proposition_weak2}
Let $X$ be the solution to SDE \eqref{SDE_classical} on the positive real line $[0,\infty)$ with autonomous coefficients $b(x)$ and $\sigma(x)$ satisfying conditions \eqref{SDE_condition1} and \eqref{SDE_condition2}. Assume further that the coefficients are in $C^4(\mathbb{R}^d)$ and have derivatives of polynomial growth.
For a fixed $\delta\in(0,1)$, let $X_\delta$ be the process on $[0,\infty)$ defined in \eqref{def_Xdelta}--\eqref{def_interpolation}.  
Let $g\in C^4(\mathbb{R}^d)$ have derivatives of polynomial growth. 
Then there exists a positive constant $C$ not depending on $\delta$ or $t$ such that
\begin{align}\label{estimate_weak}
	\bigl|\mathbb{E}[g(X(t))-g(X_\delta(t))]\bigr|\le \delta C e^{Ct} \ \ \textrm{for all} \ \ t\ge 0. 
\end{align}
\end{proposition}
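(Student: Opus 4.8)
The plan is to use the backward Kolmogorov (PDE) representation of the weak error together with a second application of It\^o's formula to extract a local error of order $\delta^2$; this is the Talay--Tubaro argument underlying Theorem 14.5.1 of \cite{KloedenPlaten}, but carried out while tracking the $t$-dependence so that the global bound takes the form $\delta Ce^{Ct}$ rather than one that deteriorates on $[0,\infty)$. As before I specialize to $d=m=1$. First I would introduce $u(s,x):=\mathbb{E}[g(X^x(s))]$, where $X^x$ is the solution of the autonomous SDE \eqref{SDE_classical} started at $x$. Because the coefficients are autonomous, $u$ solves the backward Kolmogorov equation $\partial_s u=\mathcal{L}u$ with $u(0,\cdot)=g$ and $\mathcal{L}=b(x)\partial_x+\tfrac12\sigma^2(x)\partial_x^2$. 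Using the assumed $C^4$-regularity and polynomial growth of $b,\sigma,g$ and the standard theory of differentiability of stochastic flows, I would record that $u(s,\cdot)\in C^4$ with spatial derivatives obeying $|\partial_x^k u(s,x)|\le Ce^{Cs}(1+|x|^p)$ for $k\le 4$, some $p$, and $C$ independent of $s$; the exponential-in-$s$ growth is exactly what produces the $e^{Ct}$ factor. I would also invoke the $X_\delta$-analogue of \eqref{moment_estimate}, namely $\mathbb{E}[|X_\delta(s)|^{2\ell}]\le Ce^{Cs}$ uniformly in $\delta$.

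Assuming first that $t=\tau_N$ is a grid point, I would telescope
\[
	\mathbb{E}[g(X_\delta(t))]-\mathbb{E}[g(X(t))]=\sum_{n=0}^{N-1}\mathbb{E}\bigl[u(t-\tau_{n+1},X_\delta(\tau_{n+1}))-u(t-\tau_n,X_\delta(\tau_n))\bigr],
\]
using $u(0,X_\delta(t))=g(X_\delta(t))$ and $u(t,y_0)=\mathbb{E}[g(X(t))]$. On $[\tau_n,\tau_{n+1}]$ the process $X_\delta$ is a diffusion with coefficients frozen at $X_\delta(\tau_n)$, so It\^o's formula applied to $r\mapsto u(t-r,X_\delta(r))$ together with the PDE collapses the $n$th summand (the mean-zero stochastic integral dropping out) to
\[
	\mathbb{E}\!\left[\int_{\tau_n}^{\tau_{n+1}}\!\Bigl\{[b(X_\delta(\tau_n))-b(X_\delta(r))]\,\partial_x u+\tfrac12[\sigma^2(X_\delta(\tau_n))-\sigma^2(X_\delta(r))]\,\partial_x^2 u\Bigr\}(t-r,X_\delta(r))\,dr\right].
\]

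Writing $G(r,y)$ for the integrand (as a function of $r$ and of the running point $y=X_\delta(r)$), the crucial point is that $G(\tau_n,X_\delta(\tau_n))=0$, so a second application of It\^o's formula gives $\mathbb{E}[G(r,X_\delta(r))]=\int_{\tau_n}^r\mathbb{E}[H(s,X_\delta(s))]\,ds$ for $r\in[\tau_n,\tau_{n+1}]$, where $H$ collects $\partial_s G$ and the frozen generator applied to $G$. Now $H$ involves spatial derivatives of $u$ up to order four --- which is precisely why $C^4$ is assumed --- and combining the derivative bounds on $u$ at time $t-s$ with the moment bounds on $X_\delta$ yields $\mathbb{E}[|H(s,X_\delta(s))|]\le Ce^{Ct}$ uniformly in $n$ and $s\in[\tau_n,\tau_{n+1}]$. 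Integrating twice over an interval of length $\delta$ then bounds the $n$th summand by $C\delta^2 e^{Ct}$, and summing the $N=t/\delta$ estimates gives $|\mathbb{E}[g(X_\delta(t))]-\mathbb{E}[g(X(t))]|\le Ct\delta e^{Ct}\le \delta Ce^{Ct}$, the last step absorbing the factor $t$ into the exponential; this is \eqref{estimate_weak} for grid points.

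For general $t\in(\tau_N,\tau_{N+1})$ I would interpolate, comparing $\mathbb{E}[g(X(t))]$ and $\mathbb{E}[g(X_\delta(t))]$ with their values at $\tau_N$ using Lemma \ref{Lemma_weak1} and the corresponding frozen-coefficient estimate for $X_\delta$, each contributing at most $\delta Ce^{Ct}$. The step I expect to be the main obstacle is the regularity input: proving that $u$ is $C^4$ in space with derivative bounds that are polynomial in $x$ and grow no faster than $e^{Cs}$ in time, and checking that these combine with the exponential moment growth of $X_\delta$ to give a clean $e^{Ct}$ bound with a constant independent of $t$. The remaining double-It\^o expansion is routine but bookkeeping-heavy, and I would keep the constant $C$ generic throughout in the spirit of the proof of Proposition \ref{Proposition_A(T)}.
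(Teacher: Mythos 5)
Your proposal is correct and follows essentially the same route as the paper: the paper omits the proof of Proposition \ref{Proposition_weak2}, stating that it proceeds along the lines of Theorem 14.5.1 of \cite{KloedenPlaten} (with details deferred to Proposition 7 of the first version \cite{JumKobayashi}), and that is exactly the backward-Kolmogorov/Talay--Tubaro telescoping you carry out, with the $t$-dependence tracked so that the global bound becomes $\delta Ce^{Ct}$. The one input you rightly flag as the main technical burden --- the $C^4$ spatial regularity of $u(s,\cdot)$ with derivative bounds of the form $Ce^{Cs}(1+|x|^p)$ uniformly in $s$ --- is the same standard flow-differentiability estimate on which the cited proof also relies, so no essential gap remains.
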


Note that application of Theorem 14.5.1 of \cite{KloedenPlaten} with $\beta=1$ provides the statement of this proposition but without specifying the upper bound in \eqref{estimate_weak} as a function of $t$. 
The first version of this paper \cite{JumKobayashi} gives a proof to clarify how the upper bound depending on $t$ emerges; however, since the idea used in the proof is similar to that of the proof of Theorem 14.1.5 of \cite{KloedenPlaten}, we omit the proof here and refer the interested readers to Proposition 7 of \cite{JumKobayashi}.

\subsection{Error estimates concerning approximations of inverse subordinators}

The following simple fact on inverse subordinators will play an important role in establishing the main results of this paper in 
Theorems \ref{Theorem_main2}, \ref{Theorem_main3} and \ref{Theorem_main4}.
It states that any inverse subordinator has finite exponential moment, which was originally proved in \cite{MagdziarzOW}. Here, we give an alternative proof which uses Laplace transform.

\begin{lemma}\label{Lemma_moments_general}
Let $E$ be the inverse of a subordinator $D$ with Laplace exponent $\psi$ in \eqref{def_LaplaceExponent} and infinite L\'evy measure. 
Then for all $\lambda\in\mathbb{R}$ and $t\ge 0$,
$\mathbb{E}[e^{\lambda E(t)}]<\infty$.
In particular, for each $t> 0$, moments of $E(t)$ of all orders exist and are given by 
\begin{align*}
	\mathbb{E}[E^n(t)]\bigr]
	=\mathcal{L}^{-1}_s\biggl[\dfrac{n!}{s\psi^n(s)}\biggr](t), \ \ n\in \mathbb{N},
\end{align*}
where $\mathcal{L}^{-1}_s[g(s)]$ denotes the inverse Laplace transform of a function $g(s)$.
\end{lemma}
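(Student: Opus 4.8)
The plan is to reduce both assertions to the first-passage duality between $E$ and $D$ and then exploit the explicit Laplace transform \eqref{def_LaplaceExponent}. The key identity is $\mathbb{P}(E(t)>u)=\mathbb{P}(D(u)<t)$, which follows from the definition \eqref{inverse}: one has $\{D(u)<t\}\subseteq\{E(t)>u\}\subseteq\{D(u)\le t\}$ by monotonicity and right-continuity of $D$, and since $\nu(0,\infty)=\infty$ forces $D(u)$ to be atomless for each $u>0$, the two inclusions collapse to an equality. Taking the Laplace transform in the time variable $t$ and applying Tonelli's theorem then yields
\[
	\int_0^\infty e^{-st}\,\mathbb{P}(E(t)>u)\,dt=\int_0^\infty e^{-st}\,\mathbb{P}(D(u)<t)\,dt=\frac1s\,\mathbb{E}\bigl[e^{-sD(u)}\bigr]=\frac{e^{-u\psi(s)}}{s},
\]
where the middle equality integrates $e^{-st}$ over $\{t>D(u)\}$. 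This single formula drives everything.

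For the exponential moments I would use the layer-cake identity $\mathbb{E}[e^{\lambda E(t)}]=1+\lambda\int_0^\infty e^{\lambda u}\,\mathbb{P}(E(t)>u)\,du$ for $\lambda>0$ (the case $\lambda\le0$ being trivial as $E(t)\ge0$), together with the Chernoff bound $\mathbb{P}(D(u)<t)\le e^{st}\mathbb{E}[e^{-sD(u)}]=e^{st-u\psi(s)}$ valid for every $s>0$. This gives
\[
	\mathbb{E}[e^{\lambda E(t)}]\le 1+\lambda e^{st}\int_0^\infty e^{-u(\psi(s)-\lambda)}\,du=1+\frac{\lambda e^{st}}{\psi(s)-\lambda}<\infty,
\]
once $s$ is chosen so that $\psi(s)>\lambda$. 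Such an $s$ exists because $\psi(s)\to\infty$ as $s\to\infty$, which follows from \eqref{def_LaplaceExponent} by monotone convergence (using $\nu(0,\infty)=\infty$); this is where the infinite-Lévy-measure hypothesis enters decisively. Finiteness of every polynomial moment is then immediate from $E^n(t)\le (n!/\lambda^n)e^{\lambda E(t)}$.

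Finally, for the moment formula I would combine the layer-cake representation $\mathbb{E}[E^n(t)]=n\int_0^\infty u^{n-1}\mathbb{P}(E(t)>u)\,du$ with the displayed Laplace computation. Interchanging the $u$- and $t$-integrals (legitimate by nonnegativity) and evaluating the Gamma integral $\int_0^\infty u^{n-1}e^{-u\psi(s)}\,du=(n-1)!/\psi^n(s)$ gives
\[
	\int_0^\infty e^{-st}\,\mathbb{E}[E^n(t)]\,dt=\frac{n}{s}\cdot\frac{(n-1)!}{\psi^n(s)}=\frac{n!}{s\psi^n(s)},
\]
whence the stated expression follows upon inversion. The only delicate points are bookkeeping: the Fubini/Tonelli interchanges are clean since all integrands are nonnegative, and to justify the inversion I would observe that $t\mapsto\mathbb{E}[E^n(t)]$ is finite, nondecreasing, and continuous (by dominated convergence, using the path-continuity of $E$ and the just-established moment bounds), so uniqueness of the Laplace transform identifies it with $\mathcal{L}^{-1}_s[n!/(s\psi^n(s))]$. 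I expect the main obstacle to be precisely this identification, together with pinning down the clean form of the duality identity; once $\psi(s)\to\infty$ and the atomlessness of $D(u)$ are in hand, the remaining computations are routine.
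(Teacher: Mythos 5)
Your proof is correct, and it rests on the same central device as the paper's: the first-passage duality between $E$ and $D$, Laplace-transformed in $t$ via \eqref{def_LaplaceExponent}, combined with the observation that $\psi(s)\to\infty$ (forced by $\nu(0,\infty)=\infty$) so that $s$ can be chosen with $\psi(s)>\lambda$. The differences are in execution, and they are worth noting. The paper works with $\mathcal{L}_t[\mathbb{P}(E(t)\le x)](s)=(1-e^{-x\psi(s)})/s$, differentiates in $x$ to obtain $\mathcal{L}_t[\mathbb{P}(E(t)\in dx)](s)=\frac{\psi(s)}{s}e^{-x\psi(s)}\,dx$, computes the double Laplace transform $\mathcal{L}_t[\mathbb{E}[e^{\lambda E(t)}]](s)=\frac{\psi(s)}{s}(\psi(s)-\lambda)^{-1}$, concludes finiteness for almost every $t$ and upgrades to all $t$ by monotonicity of $E$, and then obtains the moment formula by differentiating $n$ times in $\lambda$ and letting $\lambda\to 0$ under dominated convergence. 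You instead use layer-cake representations throughout: the Chernoff bound $\mathbb{P}(D(u)<t)\le e^{st-u\psi(s)}$ gives $\mathbb{E}[e^{\lambda E(t)}]<\infty$ directly for \emph{every} $t$ (no almost-everywhere step), and the identity $\mathbb{E}[E^n(t)]=n\int_0^\infty u^{n-1}\mathbb{P}(E(t)>u)\,du$ plus Tonelli and a Gamma integral gives $\mathcal{L}_t[\mathbb{E}[E^n(t)]](s)=n!/(s\psi^n(s))$ without differentiating in either $x$ or $\lambda$. Your route buys cleaner justifications (everything is nonnegative, so only Tonelli is needed, and you avoid having to legitimize the formal passage from the transformed CDF to a transformed density); the paper's route buys the intermediate identity $\mathcal{L}_t[\mathbb{E}[E^n(t)e^{\lambda E(t)}]](s)=\frac{\psi(s)}{s}\frac{n!}{(\psi(s)-\lambda)^{n+1}}$, which is slightly more information. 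Your attention to the endpoints of the duality (atomlessness of $D(u)$, which holds by Sato's theorem on infinitely divisible laws with infinite L\'evy measure, and the final identification by continuity and monotonicity of $t\mapsto\mathbb{E}[E^n(t)]$) is sound and fills in details the paper leaves implicit.
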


\begin{proof}
Fix $x>0$. It follows from the inverse relationship between $D$ and $E$ that
\[
  \mathbb{P}(E(t)\le x)=\mathbb{P}(D(x)\ge t)
   =1-\mathbb{P}(D(x)< t)
\]
for $t>0$. 
Taking the Laplace transform with respect to $t$ on both sides and using \eqref{def_LaplaceExponent}, we obtain 
\[
	\mathcal{L}_t\bigl[\mathbb{P}(E(t)\le x)\bigr](s)
	=\dfrac{1}{s}-\dfrac{1}{s}\mathcal{L}_t[\mathbb{P}(D(x)\in dt)](s) 
	=\dfrac{1-e^{-x\psi(s)}}{s}
\]
for $s>0$,
where 
$\mathcal{L}_t[f(t)]$ and $\mathcal{L}_t[\mu(dt)]$ denote the Laplace transforms of a function $f(t)$ and a measure $\mu(dt)$, respectively. 
The right hand side of the above identity being differentiable with respect to $x$, so is the left hand side, and 
\begin{align*}
	\mathcal{L}_t[\mathbb{P}(E(t)\in dx)](s)
	=\dfrac{\psi(s)}{s}e^{-x\psi(s)} dx. 
\end{align*}
Hence, for a fixed $\lambda\in\mathbb{R}$ and for large $s>0$ such that $\psi(s)>\lambda$ (such $s$ necessarily exists since the L\'evy measure is assumed infinite), the Fubini theorem yields
\begin{align}\label{double_Laplace}
	\mathcal{L}_t \big[\mathbb{E}[e^{\lambda E(t)}]\bigr](s)
	= \int_0^\infty \dfrac{\psi(s)}{s}e^{-x(\psi(s)-\lambda)}  dx
	=\dfrac{\psi(s)}{s}(\psi(s)-\lambda)^{-1}<\infty.
\end{align}
This implies in particular that $\mathbb{E}[e^{\lambda E(t)}]<\infty$ for almost all $t>0$, but since the sample paths of $E$ are nondecreasing, this is indeed true for \textit{all} $t>0$. 
Therefore, for each fixed $t>0$, moments of $E(t)$ of all orders exist. 
Now, for a fixed $n\in\mathbb{N}$ and any $\lambda\in(-\epsilon,\epsilon)$, where $\epsilon>0$, we have the inequality 
$
	\mathbb{E}[E^n(t)e^{\lambda E(t)}]
	\le n! \mathbb{E}[e^{(1+\epsilon) E(t)}].
$
The right hand side is Laplace transformable as observed above, and therefore, taking derivatives with respect to $\lambda$ in the identity \eqref{double_Laplace}
yields
\begin{align*}
	\mathcal{L}_t\bigl[\mathbb{E}[E^n(t)e^{\lambda E(t)}]\bigr](s)
	=\dfrac{\psi(s)}{s}\dfrac{n!}{(\psi(s)-\lambda)^{n+1}}.
\end{align*}
Letting $\lambda \to 0$ and using the dominated convergence theorem (again due to the above estimate) gives
$
	\mathcal{L}_t\bigl[ \mathbb{E}[E^n(t)]\bigr](s)
	=n!/(s\psi^n(s)).
$
Taking the inverse Laplace transform completes the proof.
\end{proof}

\begin{example}\label{Remark_MittagLeffler}
\begin{em}
Let $D_\beta$ be a $\beta$-stable subordinator with $\beta\in(0,1)$ so that the Laplace exponent is given by $\psi(s)=s^\beta$. Let $E_\beta$ be the inverse of $D_\beta$. Then by Lemma \ref{Lemma_moments_general}, for each $t\ge 0$ and $n\in\mathbb{N}$, 
\begin{align}\label{moments_Salpha}
	\mathbb{E}[E_\beta^n(t)]
	=\mathcal{L}^{-1}_s\biggl[\dfrac{n!}{s^{n\beta+1}}\biggr](t)
	=\dfrac{n!}{\Gamma(n\beta+1)}t^{n\beta},
\end{align}
where $\Gamma(\cdot)$ is the Gamma function. 
Moreover, this implies that for all $\lambda\in\mathbb{R}$,  
$
	\mathbb{E}[e^{\lambda E_\beta(t)}]
	=\mathbf{E}_{\beta}(\lambda t^\beta),
$
where $\mathbf{E}_{\beta}(z):=\sum_{n=0}^\infty z^n/\Gamma(n\beta+1)$ is the Mittag--Leffler function. 
Therefore, Lemma \ref{Lemma_moments_general} can be regarded as a generalization of Proposition 1(a) iii) of \cite{Bingham}. 
\end{em}
\end{example}

Fix $\delta\in (0,1)$ and $T>0$.
To approximate an inverse subordinator $E$ on the interval $[0,T]$, 
we follow an idea presented in \cite{GajdaMagdziarz} to first simulate a sample path of the subordinator $D$, which has independent and stationary increments, by setting  $D(0)=0$ and then following the rule $D(i\delta):=D((i-1)\delta)+Z_i$, $i=1,2,3,\ldots,$
where $\{Z_i; i=1,2,\ldots\}$ is an i.i.d.\ sequence with $Z_i=^d D(\delta)$. We stop this procedure upon finding the integer $N$ satisfying 
\begin{align}\label{def_N}
	T\in[D(N\delta), D((N+1)\delta)).
\end{align}
 Note that the $\mathbb{N}\cup\{0 \}$-valued random variable $N$ indeed exists since  
 $D(t)\to\infty$ as $t\to\infty$ with probability one. 
To generate the random variables $\{Z_i\}$, one can use algorithms presented in Chapter 6 of \cite{ContTankov}; also consult \cite{BaeumerMeerschaert_temperedstable} for simulation of exponentially tempered stable random variables. 
Next, let
\begin{align}\label{def_Spsidelta}
	E_\delta(t)
	:=\bigl(\min\{n\in \mathbb{N}; D(n\delta)>t\}-1\bigr)\delta, \ \ t\in[0,T].
\end{align}
The sample paths of $E_\delta$ are nondecreasing step functions with constant jump size $\delta$ and the $i$th waiting time given by $Z_i=D(i\delta)-D((i-1)\delta)$. Indeed, it is easy to see that for $n=0,1,2,\ldots,N$,
\begin{align}\label{property_Spsidelta}
	E_\delta(t)=n\delta \ \ \textrm{whenever} \ \ t\in[D(n\delta),D((n+1)\delta)).
\end{align}
In particular, \eqref{def_N} is equivalent to 
\begin{align}\label{property_N}
	E_\delta(T)=N\delta.
\end{align}
The process $E_\delta$ efficiently approximates $E$, as the following lemma shows.

\begin{lemma}\label{Lemma_Spsi_approx}
Let $E$ be the inverse of a subordinator $D$ with infinite L\'evy measure. Let $E_\delta$ be the process defined in \eqref{def_Spsidelta}.
Then with probability one, 
\begin{align}\label{ineq_Spsi}
	E(t)-\delta\le E_\delta(t)\le E(t) \ \ \textrm{for all} \ \  t\in[0,T]. 
\end{align}
\end{lemma}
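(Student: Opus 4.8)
The plan is to reduce the two-sided bound to the elementary relationship between the continuous inverse $E$ defined in \eqref{inverse} and the step function $E_\delta$ defined in \eqref{def_Spsidelta}, exploiting the description \eqref{property_Spsidelta}. First I would fix $t\in[0,T]$ and locate the index $n\in\{0,1,\dots,N\}$ with $t\in[D(n\delta),D((n+1)\delta))$, so that $E_\delta(t)=n\delta$ by \eqref{property_Spsidelta}. Everything then comes down to sandwiching $E(t)$ between $n\delta$ and $(n+1)\delta$, since $E_\delta(t)=n\delta$ and $(n+1)\delta=E_\delta(t)+\delta$.

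To show $E(t)\ge n\delta$ I would use $D(n\delta)\le t$ together with the monotonicity of $D$: for every $u\le n\delta$ we have $D(u)\le D(n\delta)\le t$, so no such $u$ lies in the set $\{u>0;D(u)>t\}$ whose infimum defines $E(t)$ in \eqref{inverse}; hence $E(t)\ge n\delta=E_\delta(t)$, which is the right inequality in \eqref{ineq_Spsi}. To show $E(t)\le (n+1)\delta$ I would use the other endpoint, $t<D((n+1)\delta)$, which says precisely that $(n+1)\delta$ belongs to $\{u>0;D(u)>t\}$; taking the infimum gives $E(t)\le (n+1)\delta=E_\delta(t)+\delta$, i.e.\ the left inequality $E(t)-\delta\le E_\delta(t)$. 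Combining the two yields \eqref{ineq_Spsi} for the fixed $t$.

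The role of the phrase \emph{with probability one} is to supply the path properties of the subordinator needed to run the above argument uniformly in $t\in[0,T]$. I would invoke the a.s.\ facts that $D$ is c\`adl\`ag and nondecreasing with $D(0)=0$ and $D(u)\to\infty$ as $u\to\infty$; the last of these guarantees the existence of the finite index $N$ in \eqref{def_N}, equivalently \eqref{property_N}. Because the right endpoints $D((n+1)\delta)$ are weakly increasing in $n$, the half-open intervals $[D(n\delta),D((n+1)\delta))$ for $n=0,\dots,N$ telescope to $[0,D((N+1)\delta))$, which contains $[0,T]$ by \eqref{def_N}; thus every $t\in[0,T]$ falls into exactly one such interval and the index $n$ chosen above is well defined.

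I do not expect a genuine obstacle here; the only point demanding care is the bookkeeping of strict versus non-strict inequalities at the endpoints, which is exactly what separates the two sides of \eqref{ineq_Spsi}. Since the infimum defining $E(t)$ uses the strict inequality $D(u)>t$, a boundary equality such as $D(n\delta)=t$ is harmless when deducing $E(t)\ge n\delta$, whereas it is the strictness of $t<D((n+1)\delta)$ that makes $(n+1)\delta$ an admissible point when deducing $E(t)\le(n+1)\delta$. The remaining care lies in confirming that the intervals cover $[0,T]$ almost surely, which follows from the structural facts recorded above.
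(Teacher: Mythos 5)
Your proof is correct and follows essentially the same route as the paper: the paper simply notes that $E(D(n\delta))=n\delta$ and that $E$ is continuous and nondecreasing, then compares this with \eqref{property_Spsidelta}, while you unwind the same comparison directly from the infimum definition \eqref{inverse} and the monotonicity of $D$. The care you take with strict versus non-strict inequalities at the interval endpoints is exactly the content the paper leaves implicit in the word ``immediately.''
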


\begin{proof}
An original proof of this lemma is due to \cite{Magdziarz_spa}. Here, we give a slightly different but simple argument to obtain the same result. 

Since the sample paths of $E$ are continuous and nondecreasing and satisfy 
\begin{align}\label{property_Spsi}
	E(D(n\delta))=n\delta,  n=0,1,2,\ldots, 
\end{align}	
comparison of \eqref{property_Spsidelta} and \eqref{property_Spsi} immediately gives the desired result. 
\end{proof}

\subsection{Main results --- strong and weak convergence along with their respective orders}

By the duality principle (Lemma \ref{Lemma_duality}), the solution $Y$ to SDE \eqref{SDE_new} on a fixed interval $[0,T]$ can be expressed as $X\circ E$ with $X$ denoting the solution to SDE \eqref{SDE_classical} on $[0,\infty)$ and $E$ denoting an inverse subordinator. Hence, it is reasonable to approximate $Y$ by the process $Y_\delta$ defined by 
\begin{align}\label{def_Ypsi}
	Y_\delta(t):=X_\delta(E_\delta(t)), \ \ t\in[0,T],
\end{align}
where $X_\delta$ and $E_\delta$ are the processes defined in \eqref{def_Xdelta}--\eqref{def_interpolation} and \eqref{def_Spsidelta}, respectively. Note that we consider $X$ and $X_\delta$ on the positive real line $[0,\infty)$ (rather than on a finite interval), and hence, the expressions $Y=X\circ E$ and $Y_\delta=X_\delta\circ E_\delta$ are both meaningful even though $E$ and $E_\delta$ can take all values in $[0,\infty)$. 
This is why 
we established Propositions \ref{Proposition_A(T)} and \ref{Proposition_weak2} with the time interval being $[0,\infty)$.

On the other hand, $Y$ and its approximation $Y_\delta$ in \eqref{def_Ypsi} are defined on a finite interval $[0,T]$. At the time horizon, the process $Y_\delta$ takes the value $Y_\delta(T)=X_\delta(N\delta)$ due to \eqref{property_N}. Hence, to generate a sample path of $Y_\delta$, we first find the integer $N$ satisfying \eqref{def_N} and then construct $X_\delta$ on the bounded interval $[0,N\delta]$ using the finitely many discretization points $\{0,\delta,2\delta,\ldots,N\delta\}$. 
Details on how to conduct simulation will be summarized in Section \ref{section_simulation}.

Now, a natural question to ask is whether $Y_\delta$ converges to $Y$ in some reasonable sense as $\delta\to 0$ and, if so, what the rate of convergence is. The following theorems answer this question.

\begin{theorem}\label{Theorem_main2}
Let $B$ be an $m$-dimensional Brownian motion independent of a subordinator $D$ with infinite L\'evy measure with inverse $E$.
Let $Y$ be the solution to SDE \eqref{SDE_new} on a fixed interval $[0,T]$ satisfying conditions \eqref{SDE_condition1}, \eqref{SDE_condition2} and \eqref{SDE_condition3}. For a fixed $\delta\in(0,1)$, let $Y_\delta$ be the process defined in \eqref{def_Ypsi}. Then 
\begin{align*}
	\mathbb{E}[|Y(T)-Y_\delta(T)|^2]
	\le C\delta^{\min(2\gamma,1)},
\end{align*}
where $C$ is a positive constant not depending on $\delta$. In particular, $Y_\delta$ converges strongly to $Y$ at the time horizon $T$ with order $\min(\gamma,1/2)$; that is, 
\begin{align}\label{strong_convergence}
	\mathbb{E}[|Y(T)-Y_\delta(T)|]
	\le C^{1/2}\delta^{\min(\gamma,1/2)}.
\end{align}
\end{theorem}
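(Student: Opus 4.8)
The plan is to invoke the duality principle (Lemma \ref{Lemma_duality}) to write $Y(T)=X(E(T))$ and $Y_\delta(T)=X_\delta(E_\delta(T))$, and then split the error through the intermediate quantity $X(E_\delta(T))$:
\begin{align*}
|Y(T)-Y_\delta(T)|\le |X(E(T))-X(E_\delta(T))|+|X(E_\delta(T))-X_\delta(E_\delta(T))|.
\end{align*}
The first term isolates the error from replacing the exact inverse subordinator $E$ by its approximation $E_\delta$ (with the diffusion path $X$ held fixed), while the second is the Euler--Maruyama error of $X$ against $X_\delta$, but evaluated at the \emph{random} time $E_\delta(T)$ rather than at a deterministic time. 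After squaring and using $(a+b)^2\le 2a^2+2b^2$, it suffices to bound the expected square of each term by a constant multiple of $\delta^{\min(2\gamma,1)}$.

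The central device for both terms is to condition on the subordinator $D$. Since $B$ is assumed independent of $D$, the pair $(X,X_\delta)$ --- a measurable functional of $B$ alone --- is independent of $(E,E_\delta)$, which is a functional of $D$. Hence, conditionally on $D$ the evaluation times $E(T)$ and $E_\delta(T)=N\delta$ become deterministic, so the estimates proved earlier for deterministic time points apply verbatim. For the second term, since $E_\delta(T)\le E(T)$ by Lemma \ref{Lemma_Spsi_approx}, conditioning on $D$ and invoking Proposition \ref{Proposition_A(T)} with $t=E_\delta(T)$ gives
\begin{align*}
\mathbb{E}\bigl[|X(E_\delta(T))-X_\delta(E_\delta(T))|^2\,\big|\,D\bigr]
\le \mathbb{E}\Bigl[\sup_{0\le s\le E_\delta(T)}|X(s)-X_\delta(s)|^2\,\Big|\,D\Bigr]
\le Ce^{CE(T)}\delta^{\min(2\gamma,1)}.
\end{align*}
For the first term, Lemma \ref{Lemma_Spsi_approx} also gives $0\le E(T)-E_\delta(T)\le\delta$, so conditioning on $D$ and applying Lemma \ref{Lemma_modulus} with $s=E_\delta(T)$, $t=E(T)$ yields
\begin{align*}
\mathbb{E}\bigl[|X(E(T))-X(E_\delta(T))|^2\,\big|\,D\bigr]\le \delta\,Ce^{CE(T)}.
\end{align*}

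To finish, I would take expectations over $D$ in both conditional estimates. The exponential factor $e^{CE(T)}$ is controlled by Lemma \ref{Lemma_moments_general}, which guarantees $\mathbb{E}[e^{CE(T)}]<\infty$; this finite quantity absorbs into the constant. Because $\min(2\gamma,1)\le 1$ and $\delta\in(0,1)$ force $\delta\le\delta^{\min(2\gamma,1)}$, the first term's bound $C\mathbb{E}[e^{CE(T)}]\delta$ is dominated by $C'\delta^{\min(2\gamma,1)}$, and combining the two pieces gives $\mathbb{E}[|Y(T)-Y_\delta(T)|^2]\le C\delta^{\min(2\gamma,1)}$. The claimed strong order then follows from the Cauchy--Schwarz (Jensen) inequality, $\mathbb{E}[|Y(T)-Y_\delta(T)|]\le(\mathbb{E}[|Y(T)-Y_\delta(T)|^2])^{1/2}\le C^{1/2}\delta^{\min(\gamma,1/2)}$, upon noting $\min(2\gamma,1)/2=\min(\gamma,1/2)$. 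The main obstacle is conceptual rather than computational: the times at which the diffusion is sampled are themselves random and can be arbitrarily large, so neither the modulus-of-continuity estimate nor the Euler--Maruyama estimate applies directly. The independence of $B$ and $D$ is precisely what licenses the conditioning that freezes these times, and the finiteness of all exponential moments of $E(T)$ (Lemma \ref{Lemma_moments_general}) is exactly what keeps the resulting $e^{CE(T)}$ factors integrable --- which is in turn why the sharp, linearly growing exponent in Proposition \ref{Proposition_A(T)} (rather than a $t^2$-type bound) is indispensable here.
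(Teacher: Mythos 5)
Your proposal is correct and follows essentially the same route as the paper's proof: the same triangle-inequality decomposition through $X(E_\delta(T))$, the same use of Lemma \ref{Lemma_Spsi_approx}, Lemma \ref{Lemma_modulus}, Proposition \ref{Proposition_A(T)}, and Lemma \ref{Lemma_moments_general}, with the independence of $B$ and $D$ justifying the conditioning on $D$ (which the paper carries out implicitly). Your explicit remarks on why $\delta\le\delta^{\min(2\gamma,1)}$ and why the sharp $e^{Ct}$ bound in Proposition \ref{Proposition_A(T)} is indispensable match the paper's own emphasis in Remark \ref{Remark_main}.
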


\begin{proof}
Note that the assumption that $B$ is independent of $D$ implies that the vectors $(X,X_\delta)$ and $(E, E_\delta)$ are independent.
By \eqref{ineq_Spsi}, Proposition \ref{Proposition_A(T)}, and the independence assumption, we observe that
\begin{align}\label{estimate_main1}
	\mathbb{E}\biggl[\sup_{0\le t\le T}
		|X(E_\delta(t))-X_\delta(E_\delta(t))|^2\biggr]
	&\le \mathbb{E}\biggl[\sup_{0\le s\le E(T)}
		|X(s)-X_\delta(s)|^2\biggr]\\
	&\le C_1\mathbb{E}[e^{C_1E(T)}]\delta^{\min(2\gamma,1)},\notag
\end{align}
where $C_1$ is a positive constant not depending on $\delta$. 
On the other hand, Lemma \ref{Lemma_modulus} along with \eqref{ineq_Spsi} and independence implies that, for some $C_2>0$,
\begin{align}\label{estimate_main2}
	\mathbb{E}[|X(E(T))-X(E_\delta(T))|^2]
	&\le C_2\mathbb{E}[e^{C_2 E(T)}]\delta. 
\end{align}
Hence, by the triangle inequality, $\mathbb{E}[|Y(T)-Y_\delta(T)|^2]$ is dominated by 
\begin{align*}
	&2\mathbb{E}[|X(E(T))-X(E_\delta(T))|^2]
	+ 2\mathbb{E}[|X(E_\delta(T))-X_\delta(E_\delta(T))|^2]\\
	&\le 2C_2\mathbb{E}[e^{C_2 E(T)}]\delta
	+ 2C_1\mathbb{E}[e^{C_1E(T)}]\delta^{\min(2\gamma,1)}.
\end{align*}
 The desired estimate now follows due to Lemma \ref{Lemma_moments_general}.
 \end{proof}

\begin{remark}\label{Remark_main}
\begin{em}
1) Our approximation scheme extends the scheme presented in Section III of \cite{GajdaMagdziarz} to SDEs of the form \eqref{SDE_new} with general time-dependent coefficients and inverse subordinators. Moreover, that paper does not discuss the order of convergence of $Y_\delta$ to $Y$. 
Thus, the result established in Theorem \ref{Theorem_main2} of this paper is completely new.

2) The argument given in item 1) of Remark \ref{Remark_Euler} also applies to Theorem \ref{Theorem_main2} (and Theorem \ref{Theorem_main3} below as well). Namely, to guarantee the statement of Theorem \ref{Theorem_main2} to hold for all $\delta\in(0,\delta_0)$ for a fixed $\delta_0\in (0,1]$, it is sufficient to assume condition \eqref{SDE_condition3} only for $s,t\ge 0$ satisfying $|s-t|\le \delta_0$.

3) Recall item 2) of Remark \ref{Remark_Euler}, where we emphasized that our proof of Proposition \ref{Proposition_A(T)} gives a sharper bound in \eqref{Estimate_Euler2} (i.e.\ $Ce^{Ct}\delta^{\min(2\gamma,1)}$) than the bound that would be obtained by a simple modification of the well-known proof of Lemma \ref{Lemma_Euler} in \cite{KloedenPlaten} (i.e.\ $Ce^{C(t^2+t)}\delta^{\min(2\gamma,1)}$). Note that the rougher bound would \textit{not} be sufficient to establish Theorem \ref{Theorem_main2} for \textit{general} inverse subordinators 
since the expectation $\mathbb{E}[e^{C (E^2(T)+E(T))}]$ may be infinite, and hence, the upper bound in \eqref{estimate_main1} may be meaningless.
For example, consider the inverse $E_\beta$ of a $\beta$-stable subordinator $D_\beta$ with $\beta\in(0,1)$ discussed in Example \ref{Remark_MittagLeffler}. 
By the formula in \eqref{moments_Salpha}, 
\begin{align*}
	\mathbb{E}[e^{\lambda E_\beta^2(T)}]
	=\sum_{n=0}^\infty \dfrac{\lambda^n\mathbb{E}[E_\beta^{2n}(T)]}{n!}
	=\sum_{n=0}^\infty\dfrac{\lambda^n}{n!}\dfrac{(2n)!}{\Gamma(2n\beta+1)}T^{2n\beta}
	=f(\lambda T^{2\beta}),
\end{align*}
where $f(z):=\sum_{n=0}^\infty a_n z^n$, $z\in\mathbb{C}$, with 
$a_n:=(2n)!/(n!\Gamma(2n\beta+1))$.
By Stirling's formula 
$\Gamma(x+1)\sim \sqrt{2\pi} x^{x+1/2}e^{-x}$ as $x\to\infty$,
it follows that, as $n\to\infty$,
\begin{align*}
	\dfrac{a_{n+1}}{a_n}
	&=\dfrac{(2n+2)!}{(n+1)!\Gamma(2(n+1)\beta+1)}\cdot 
		\dfrac{n!\Gamma(2n\beta+1)}{(2n)!}\\
	&\sim 2(2n+1)\cdot 
		\dfrac{(2n\beta)^{2n\beta+1/2}e^{-2n\beta}}
		{(2(n+1)\beta)^{2(n+1)\beta+1/2}e^{-2(n+1)\beta}}
	\sim \dfrac{2(2n+1)}{(2(n+1)\beta)^{2\beta}}.
\end{align*}
If $\beta\in(0,1/2)$, then the last expression diverges to infinity, and hence, the power series $f(z)$ converges only at $z=0$ due to the ratio test. 
Consequently, $\mathbb{E}[e^{\lambda E_\beta^2(T)}]=f(\lambda T^{2\beta})=\infty$ for all $\lambda>0$, which implies that $\mathbb{E}[e^{C (E_\beta^2(T)+E_\beta(T))}]=\infty$.

4) Instead of the Euler--Maruyama scheme, it is possible to use higher order strong It\^o--Taylor approximation schemes to construct a process approximating the solution $X$ of SDE \eqref{SDE_classical} (see Section 10.6 of \cite{KloedenPlaten}), but that does not improve the order of strong convergence of $Y_\delta$ to $Y$ 
 since the estimate \eqref{estimate_main2} remains unchanged.
 
 5) Paper \cite{Magdziarz_spa} suggests the use of a time-dependent drift coefficient of the form $b(D(r),X(r))$ in place of $b(r,X(r))$ in SDE  \eqref{SDE_classical}, where $D$ is a general subordinator. In this case, the method presented in this paper cannot be applied to obtain a convergence result regarding approximation of the process $Y=X\circ E$. In fact, $X$ defined via the drift coefficient $b(D(r),X(r))$ clearly depends on $D$ (and hence on $E$ as well); consequently, the conditioning argument in \eqref{estimate_main1} is no longer valid. 
\end{em}
\end{remark}

The next theorem shows that the strong convergence of $Y_\delta$ to $Y$ discussed in Theorem \ref{Theorem_main2} actually occurs \textit{uniformly} over the entire interval $[0,T]$. However, the proof provided below, which utilizes a result on modulus of continuity for stochastic integrals in \cite{FischerNappo}, does not provide the exact order of convergence.

\begin{theorem}\label{Theorem_main3}
Let $B$ be an $m$-dimensional Brownian motion independent of a subordinator $D$ with infinite L\'evy measure with inverse $E$.
Let $Y$ be the solution to SDE \eqref{SDE_new}  on a fixed interval $[0,T]$ satisfying conditions \eqref{SDE_condition1}, \eqref{SDE_condition2} and \eqref{SDE_condition3}. For $\delta\in(0,1)$, let $Y_\delta$ be the process defined in \eqref{def_Ypsi}. Then 
$Y_\delta$ converges strongly to $Y$ uniformly on $[0,T]$ in $L^2$; i.e.\ 
 \[
 	\lim_{\delta\to 0}\mathbb{E}[\sup_{0\le t\le T}|Y(t)-Y_\delta(t)|^2]=0.
 \]
\end{theorem}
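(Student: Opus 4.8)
The plan is to decompose the error exactly as in the proof of Theorem \ref{Theorem_main2}, but now carrying the supremum over the whole interval $[0,T]$ rather than evaluating only at the horizon. Writing $Y(t)=X(E(t))$ and $Y_\delta(t)=X_\delta(E_\delta(t))$ via the duality principle, I would start from
\begin{align*}
	\sup_{0\le t\le T}|Y(t)-Y_\delta(t)|^2
	&\le 2\sup_{0\le t\le T}|X(E(t))-X(E_\delta(t))|^2\\
	&\quad+2\sup_{0\le t\le T}|X(E_\delta(t))-X_\delta(E_\delta(t))|^2
\end{align*}
and bound the two terms separately. Throughout I would use that $B$ is independent of $D$, so that the pairs $(X,X_\delta)$ and $(E,E_\delta)$ are independent; this lets me condition on the time-change paths while leaving the law of $(X,X_\delta)$ unchanged, which is the device already exploited in \eqref{estimate_main1}.

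The second term is the Euler--Maruyama error read off along the random clock $E_\delta$, and it is the easy one. Since \eqref{ineq_Spsi} gives $E_\delta(t)\le E(t)\le E(T)$ for every $t\in[0,T]$, the supremum is dominated by $\sup_{0\le s\le E(T)}|X(s)-X_\delta(s)|^2$. Conditioning on $E(T)$, applying Proposition \ref{Proposition_A(T)} on the now-deterministic horizon, and then integrating against the law of $E(T)$ produces a bound of the form $C\,\mathbb{E}[e^{CE(T)}]\,\delta^{\min(2\gamma,1)}$, which is finite by Lemma \ref{Lemma_moments_general}. This is precisely the estimate \eqref{estimate_main1}, and it tends to $0$ with an explicit rate.

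The first term is the genuine obstacle, and it is the reason no quantitative order survives. By \eqref{ineq_Spsi} the two clocks satisfy $0\le E(t)-E_\delta(t)\le\delta$ with both values lying in $[0,E(T)]$, so
\[
	\sup_{0\le t\le T}|X(E(t))-X(E_\delta(t))|^2
	\le\Bigl(\sup_{\substack{0\le u,v\le E(T)\\ |u-v|\le\delta}}|X(u)-X(v)|\Bigr)^2;
\]
that is, it is controlled by the squared modulus of continuity of the It\^o process $X$ on $[0,E(T)]$ at scale $\delta$. Conditioning on $(E,E_\delta)$ and invoking independence, I would fix the horizon $E(T)=L$ and apply the modulus-of-continuity estimates for stochastic integrals of \cite{FischerNappo} to obtain a deterministic bound $\rho(\delta,L):=\mathbb{E}\bigl[\sup_{|u-v|\le\delta,\,u,v\in[0,L]}|X(u)-X(v)|^2\bigr]$ with $\rho(\delta,L)\to0$ as $\delta\to0$ for each fixed $L$ (here the drift contributes a term Lipschitz in time and the diffusion a martingale whose increments are handled via Burkholder, so the modulus behaves like $\sqrt{\delta\log(1/\delta)}$, but only qualitative decay is needed). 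The hard part is then passing the bound through the \emph{random} horizon $E(T)$: I would dominate $\rho(\delta,L)$ uniformly in $\delta$ by an integrable envelope, using the maximal bound $\rho(\delta,L)\le 4\,\mathbb{E}[\sup_{0\le u\le L}|X(u)|^2]\le Ce^{CL}$ in the spirit of \eqref{moment_estimate}, and then apply dominated convergence together with the exponential-moment estimate of Lemma \ref{Lemma_moments_general} to conclude $\mathbb{E}[\rho(\delta,E(T))]\to0$. Combining the two terms gives the asserted uniform $L^2$ convergence, with the lack of a rate traceable entirely to this modulus-of-continuity step.
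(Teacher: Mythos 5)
Your proposal is correct and follows essentially the same route as the paper's proof: the same two-term decomposition, the estimate \eqref{estimate_main1} for the Euler--Maruyama part, the Fischer--Nappo modulus-of-continuity bound for $X$ evaluated along the two clocks, and a conditioning-plus-dominated-convergence argument through the random horizon $E(T)$ using Lemma \ref{Lemma_moments_general}. The only (immaterial) difference is that the paper dominates the explicit bound $\delta\log(4E(T)/\delta)$ via $\log x<x$, whereas you use the cruder integrable envelope $Ce^{CE(T)}$; both yield the same qualitative conclusion.
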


\begin{proof}
For a fixed $u>0$, since $x\le \sqrt{2u x \log(2u/x)}$ for $0<x<u$, the $i$th component $b_i(t,x)$ of the drift coefficient of SDE \eqref{SDE_classical} satisfies 
\begin{align*}
	\int_s^t |b_i(r,X(r))|  dr
	\le (t-s)K\Bigl(1+\sup_{r\in[0,u]}|X(r)|\Bigr)
	\le \zeta\sqrt{(t-s)\log \Bigl(\dfrac{2u}{t-s}\Bigr)}
\end{align*}
for all $0\le s< t\le u$, where $\zeta:=\sqrt{2u}K(1+\sup_{r\in[0,u]}|X(r)|)$. On the other hand, the $(i,j)$th component $\sigma_{i,j}(t,x)$ of the diffusion coefficient satisfies the inequality
\[
	\int_s^t \sigma_{i,j}^2(r,X(r))  dr\le \xi(t-s)
\]
 for all $0\le s< t\le u$, where $\xi:=2K^2(1+\sup_{r\in[0,u]}|X(r)|^2)$. Exercise 4.5.5 of \cite{KloedenPlaten} shows that both $\zeta$ and $\xi$ have moments of all orders. Hence, application of Theorem 1 of \cite{FischerNappo}, which concerns the modulus of continuity of stochastic integrals driven by Brownian motion with drift, implies that there exists a constant $C$ such that
 \begin{align}\label{proof_uniform}
 	\mathbb{E}\biggl[\Bigl(\sup_{r, s\in[0, u],\ 0\le s-r\le \delta}|X(s)-X(r)|\Bigr)^2\biggr]
	\le C\delta\log\Bigl(\dfrac{2u}{\delta}\Bigr)
 \end{align}
for all $\delta\in(0, u]$.
Note that the proof of Theorem 1 of \cite{FischerNappo} shows that the constant $C$ in \eqref{proof_uniform} can be taken independently of $\delta$ and the fixed time horizon $u$. 
Using \eqref{ineq_Spsi}, we observe that $\mathbb{E}[(\sup_{0\le t\le T}|X(E(t))-X(E_\delta(t))|)^2]$ is dominated by 
\begin{align*}
	&\mathbb{E}\biggl[\Bigl(\sup_{r, s\in[0, E(T)],\ 0\le s-r\le \delta}|X(s)-X(r)|\Bigr)^2\biggr]\\
	&\le \mathbb{E}\biggl[\Bigl(\sup_{r, s\in[0, E(T)],\ 0\le s-r\le \delta}|X(s)-X(r)|\Bigr)^2\mathbf{1}_{\{E(T)\ge \delta\}}\biggr]\\
	&\ \ \ +\mathbb{E}\biggl[\Bigl(\sup_{r, s\in[0, \delta],\ 0\le s-r\le \delta}|X(s)-X(r)|\Bigr)^2\biggr],
\end{align*}
where $\mathbf{1}_U$ denotes the indicator function of a set $U$. Hence, it follows from the independence assumption and the estimate \eqref{proof_uniform} with $u=E(T)$ and $u=\delta$ that
\begin{align}\label{proof_uniform2}
	\mathbb{E}\biggl[\Bigl(\sup_{0\le t\le T}|X(E(t))-X(E_\delta(t))|\Bigr)^2\biggr]
	&\le C \mathbb{E}\biggl[\delta \log \Bigl(\dfrac{2E(T)}{\delta}\Bigr)\biggr]
		+C\delta \log 2\\
	&=C \mathbb{E}\biggl[\delta \log \Bigl(\dfrac{4E(T)}{\delta}\Bigr)\biggr].\notag
\end{align}
By the triangle inequality and the estimates \eqref{estimate_main1} and \eqref{proof_uniform2},
\begin{align*}
	&\mathbb{E}\biggl[\sup_{0\le t\le T}|Y(t)-Y_\delta(t)|^2\biggr]\\
	&\le 
	2\mathbb{E}\biggl[\sup_{0\le t\le T}|X(E(t))-X(E_\delta(t))|^2\biggr]
	+2\mathbb{E}\biggl[\sup_{0\le t\le T}|X(E_\delta(t))-X_\delta(E_\delta(t))|^2\biggr]\\
	&\le 
	2C \mathbb{E}\biggl[\delta \log \Bigl(\dfrac{4E(T)}{\delta}\Bigr)\biggr]
	+ 2C_1\mathbb{E}[e^{C_1 E(T)}]\delta^{\min(2\gamma,1)}.
\end{align*}
Now, the obvious inequality $\log x<x$ for $x>0$ together with Lemma \ref{Lemma_moments_general} allows the use of the dominated convergence to yield 
$
	\lim_{\delta\to 0}\mathbb{E}[\delta \log (4E(T)/\delta)]= 0,
$
which completes the proof. 
\end{proof}

Many practical situations do not require so strong a convergence of $Y_\delta$ to $Y$ as in Theorems \ref{Theorem_main2} and \ref{Theorem_main3}, but may only need e.g.\ computation of moments at the time horizon $T$. In such cases, it is more reasonable to look for an upper bound for the quantity $\bigl|\mathbb{E}[g(Y(T))-g(Y_\delta(T))]\bigr|$ for some function $g$ rather than the pathwise error estimate in \eqref{strong_convergence}.  
We know a priori from Theorem \ref{Theorem_main2} and the mean value theorem that, as long as $g$ is a function with a bounded derivative, the estimate
$
	\bigl|\mathbb{E}[g(Y(T))-g(Y_\delta(T))]\bigr|
	\le C\delta^{\min(\gamma,1/2)}
$
holds.
 However, as the following theorem shows, 
 the upper bound can be improved under some smoothness assumptions on the function $g$ and the coefficients of SDE \eqref{SDE_new}.

\begin{theorem}\label{Theorem_main4}
Let $B$ be an $m$-dimensional Brownian motion independent of a subordinator $D$ with infinite L\'evy measure with inverse $E$.
Let $Y$ be the solution to SDE \eqref{SDE_new} on a fixed interval $[0,T]$ 
 with autonomous coefficients $b(x)$ and $\sigma(x)$ satisfying conditions \eqref{SDE_condition1} and \eqref{SDE_condition2}. Assume further that the coefficients are in $C^4(\mathbb{R}^d)$ and have derivatives of polynomial growth.
For a fixed $\delta\in(0,1)$, let $Y_\delta$ be the process defined in \eqref{def_Ypsi}. 
Let $g\in C^4(\mathbb{R}^d)$ have derivatives of polynomial growth. 
Then 
\begin{align}\label{weak_convergence}
	\bigl|\mathbb{E}[g(Y(T))-g(Y_\delta(T))]\bigr|
	\le C\delta,
\end{align}
where $C$ is a positive constant not depending on $\delta$;
thus, $Y_\delta$ converges weakly to $Y$ at the time horizon $T$ with order $1$.
\end{theorem}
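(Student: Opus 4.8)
The plan is to reuse the architecture of the proof of Theorem \ref{Theorem_main2}, replacing the strong $L^2$ estimates by the one-point \emph{weak} estimates established earlier. By the duality principle (Lemma \ref{Lemma_duality}) together with \eqref{property_N}, I would write $Y(T)=X(E(T))$ and $Y_\delta(T)=X_\delta(E_\delta(T))$, then insert the intermediate term $g(X(E_\delta(T)))$ and apply the triangle inequality to obtain
\[
	\bigl|\mathbb{E}[g(Y(T))-g(Y_\delta(T))]\bigr|\le J_1+J_2,
\]
where $J_1:=|\mathbb{E}[g(X(E(T)))-g(X(E_\delta(T)))]|$ captures the error from approximating the time change $E$ by $E_\delta$, and $J_2:=|\mathbb{E}[g(X(E_\delta(T)))-g(X_\delta(E_\delta(T)))]|$ captures the Euler--Maruyama error evaluated along the approximate time change. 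The two terms are then estimated separately.

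For $J_2$, the point is that the independence of $B$ and $D$ makes $(X,X_\delta)$ independent of $(E,E_\delta)$, exactly as in \eqref{estimate_main1}. I would introduce the deterministic function $h(s):=\mathbb{E}[g(X(s))-g(X_\delta(s))]$ and use the tower property together with independence to rewrite $J_2=|\mathbb{E}[h(E_\delta(T))]|$. Proposition \ref{Proposition_weak2}, whose hypotheses are precisely the autonomous $C^4$ coefficients with polynomial-growth derivatives and $g\in C^4$ assumed in the theorem, gives $|h(s)|\le \delta C e^{Cs}$; combining this with $E_\delta(T)\le E(T)$ from \eqref{ineq_Spsi} yields $J_2\le \delta C\,\mathbb{E}[e^{C E(T)}]$.

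For $J_1$, I would again condition on the time change, this time on the pair $(E(T),E_\delta(T))$, which is independent of $X$. Setting $\phi(t,s):=\mathbb{E}[g(X(t))-g(X(s))]$, the same conditioning argument gives $J_1=|\mathbb{E}[\phi(E(T),E_\delta(T))]|$. Lemma \ref{Lemma_Spsi_approx} guarantees $0\le E(T)-E_\delta(T)\le \delta$ almost surely, so the random pair falls within the range to which Lemma \ref{Lemma_weak1} applies; that lemma then yields $|\phi(E(T),E_\delta(T))|\le \delta C e^{C E(T)}$ pointwise, whence $J_1\le \delta C\,\mathbb{E}[e^{C E(T)}]$.

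Both bounds thus reduce to the finiteness of the exponential moment $\mathbb{E}[e^{C E(T)}]$, which holds for every constant $C$ by Lemma \ref{Lemma_moments_general}; this delivers $|\mathbb{E}[g(Y(T))-g(Y_\delta(T))]|\le C\delta$ and hence weak order $1$. The one point that must be handled with care is the interplay between the $t$-dependence of the weak estimates and the randomness of $E(T)$. Here both Proposition \ref{Proposition_weak2} and Lemma \ref{Lemma_weak1} produce bounds of the form $\delta C e^{Ct}$ that are \emph{linear} in the exponent, so after substituting the random endpoint they are dominated by $\delta C e^{C E(T)}$, which is integrable. This is in pleasant contrast with the strong case, where (as explained in item 3) of Remark \ref{Remark_main}) a bound quadratic in the exponent would destroy integrability; no such danger arises here, so the only genuine work is confirming that the hypotheses of Proposition \ref{Proposition_weak2} and Lemma \ref{Lemma_weak1} coincide with those of the theorem and that the conditioning argument is carried out correctly.
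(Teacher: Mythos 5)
Your proposal is correct and follows essentially the same route as the paper's proof: the same triangle-inequality decomposition through the intermediate term $g(X(E_\delta(T)))$, with the time-change error controlled by Lemma \ref{Lemma_weak1} via \eqref{ineq_Spsi}, the Euler--Maruyama error controlled by Proposition \ref{Proposition_weak2}, and the resulting exponential moments made finite by Lemma \ref{Lemma_moments_general}. The only difference is that you spell out the conditioning/tower argument that the paper compresses into the phrase ``together with the independence assumption,'' which is a harmless elaboration rather than a different approach.
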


\begin{proof}
By the triangle inequality,  $|\mathbb{E}[g(Y(T))-g(Y_\delta(T))]|$ is dominated by 
$
	|\mathbb{E}[g(X(E(T)))-g(X(E_\delta(T)))]|
		+|\mathbb{E}[g(X(E_\delta(T)))-g(X_\delta(E_\delta(T)))]|.
$
Using Lemma \ref{Lemma_weak1} and Proposition \ref{Proposition_weak2} together with the independence assumption, we obtain 
\begin{align}\label{proof_weak0}
	|\mathbb{E}[g(Y(T))-g(Y_\delta(T))]|
	\le \delta C_1 \mathbb{E}[e^{C_1 E(T)}]+\delta C_2 \mathbb{E}[e^{C_2 E_\delta(T)}].
\end{align}
Since the expectations on the right hand side are finite due to \eqref{ineq_Spsi} and Lemma \ref{Lemma_moments_general}, the proof is complete.
\end{proof}

\begin{remark}\label{Remark_main2}
\begin{em}
1) Proposition \ref{Proposition_weak2} and Theorem \ref{Theorem_main4} apply to non-autonomous cases as well, which require additional smoothness assumptions on the coefficients (for details of this matter, see a discussion following Theorem 14.5.1 of \cite{KloedenPlaten}). 

2) 
The smoothness assumption on $g$ may create issues in some applications. For example, to price a European call option with the underlying stock price following a time-changed analogue of a Black--Scholes SDE, $g$ should be taken to be $g(x):=\max(x-K_0,0)$ for some constant $K_0$ (for details of option pricing and Black--Scholes SDEs, see e.g.\ \cite{Steele}). One way to deal with such situations is to apply Theorem \ref{Theorem_main4} to some smooth functions approximating the non-smooth function $g$. 

3) Using higher order weak It\^o--Taylor schemes instead of the Euler--Maruyama scheme (see Section 14.5 of \cite{KloedenPlaten} for details) does not improve the order of weak convergence of $Y_\delta$ to $Y$ 
 since the first term on the right hand side of the estimate \eqref{proof_weak0} remains unchanged. 
\end{em}
\end{remark}

\section{Numerical examples}\label{section_simulation}

For a given $\delta\in(0,1)$, a sample path of the process $Y_\delta$ on a fixed interval $[0,T]$, which approximates the solution $Y$ to SDE \eqref{SDE_new}, is generated by the following simple steps: 
\begin{enumerate}
	\item Simulate $D$ at the discretization points $\{0,\delta,2\delta,\ldots\}$ and stop this procedure upon finding an integer $N$ satisfying $T\in [D(N\delta),D((N+1)\delta)$. 
	\item Simulate $X_\delta$ using the Euler--Maruyama scheme at the finitely many discretization points $\{0,\delta,2\delta,\ldots,N\delta\}$. 
	\item Based on 1) and 2), set 
		\begin{itemize}
			\item $Y_\delta(t)=X_\delta(n\delta)$ for $t\in [D(n\delta), D((n+1)\delta)$ with $n=0,1,2,\ldots,N-1$;
			\item $Y_\delta(t)=X_\delta(N\delta)$ for $t\in [D(N\delta), T]$.
		\end{itemize}
\end{enumerate}
Note that the continuously interpolated values of $X_\delta$ defined in \eqref{def_interpolation} are never used in the above simulation steps; the interpolation was introduced solely for the purpose of deriving Propositions \ref{Proposition_A(T)} and \ref{Proposition_weak2}.

As a simple example with which to numerically verify the statements of Theorems \ref{Theorem_main2} and \ref{Theorem_main4}, consider the SDE 
\[
	Y(t)=1+\int_0^t Y(s)  dB(E(s)), \ \ t\in[0,1],
\]
with $B$ being a one-dimensional Brownian motion and $E$ being the inverse of an independent exponentially tempered stable subordinator $D$ whose L\'evy measure is given by $\nu(dx)=(e^{-\kappa x}/x^{1+\beta}) \mathbf{1}_{x>0} dx$,
where $\beta\in(0,1)$ is the stability index and $\kappa>0$ is a tempering factor. 
Here we fix $\beta=0.95$ and $\kappa=1$ and  
employ an algorithm presented in \cite{BaeumerMeerschaert_temperedstable} to generate sample paths of $D$. 
The solutions of SDE \eqref{SDE_classical} and SDE \eqref{SDE_new} in this case are respectively given by $X(t)=e^{B(t)-t/2}$ and $Y(t)=X(E(t))=e^{B(E(t))-E(t)/2}$. 
Note that it is not possible to generate sample paths of the \textit{exact} solution $Y=X\circ E$ since there is no way to realize sample paths of the \textit{exact} time change $E$. 
With this in mind, we compare in Figure \ref{fig:one} the sample path behavior of the ``near-exact'' solution $X\circ E_\delta$ (instead of the exact solution $Y=X\circ E$) with that of the approximation process $Y_\delta=X_\delta\circ E_\delta$ with the equidistant step size $\delta=10^{-3}$, where the underlying path of the discretized time change $E_\delta$ is also provided for reference. 
Note that because of the way the processes are constructed, the three trajectories share the same constant periods.

\begin{figure}[htbp]
 \begin{center}
  \includegraphics[bb= 0 0 587 388,width=115mm]{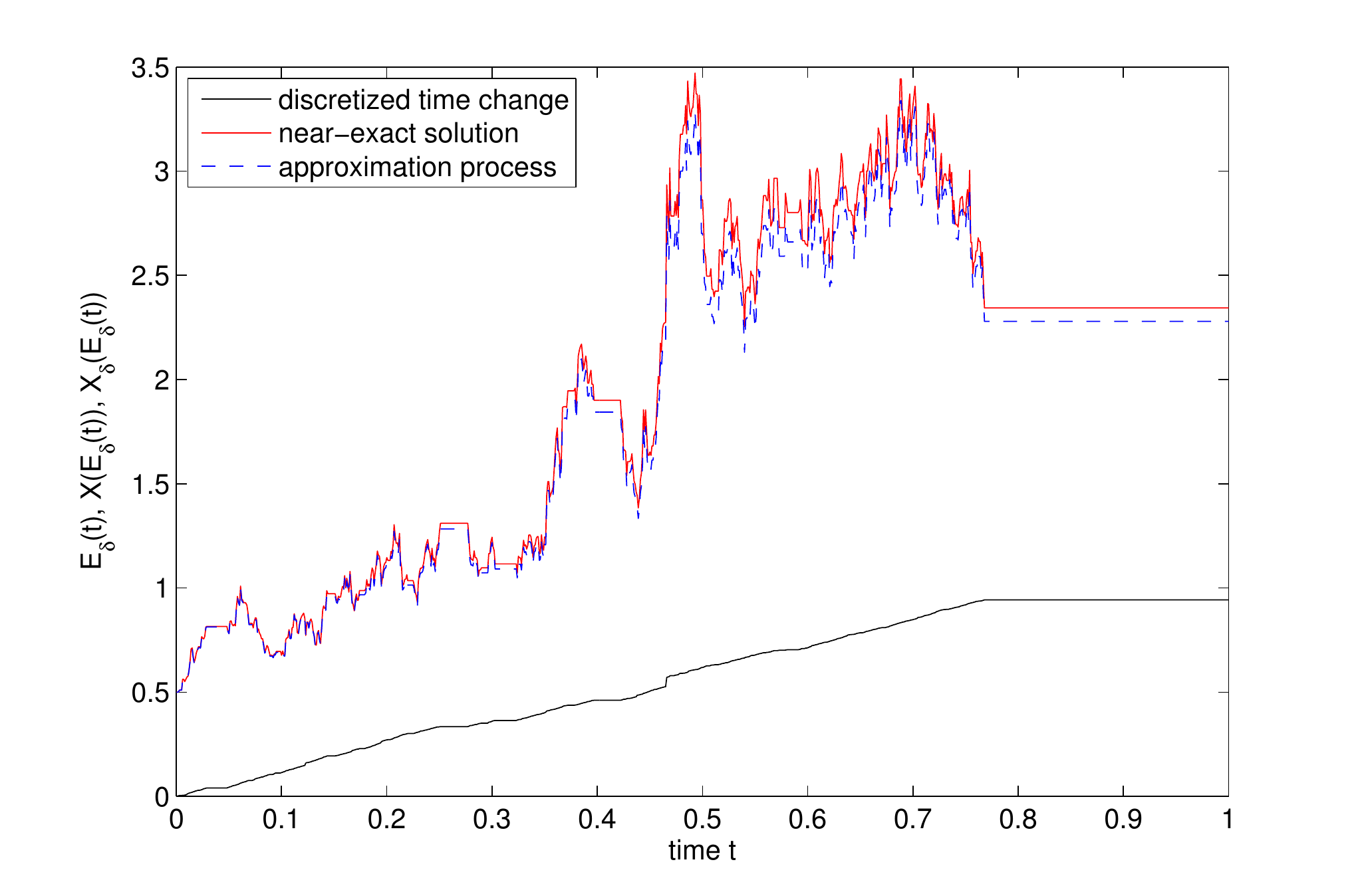}
 \caption{Comparison of sample paths of the near-exact solution $X\circ E_\delta$ and the approximation process $X_\delta\circ E_\delta$ along with the underlying sample path of the discretized time change $E_\delta$.} 
 \label{fig:one}
 \end{center}
\end{figure}

To carefully examine the order of convergence, 
for different values of $\delta$, we generated 300 sample paths for each of the near-exact solution and the approximation. We then calculated the following two errors at the time horizon $T=1$:
\begin{align*}
	\textrm{STERR}(\delta)&:=\dfrac 1{300}\sum_{i=1}^{300}|X(E_\delta(T))(\omega_i)-X_\delta(E_\delta(T))(\omega_i)|;\\
	\textrm{WKERR}(\delta)&:=\biggl|\dfrac 1{300}\sum_{i=1}^{300}X(E_\delta(T))(\omega_i)-\dfrac 1{300}\sum_{i=1}^{300}X_\delta(E_\delta(T))(\omega_i)\biggr|,
\end{align*}
where $\omega_i$ represents the $i$th realization. Here, STERR$(\delta)$ and WKERR$(\delta)$ are unbiased estimates for the theoretical errors involved with strong convergence in \eqref{strong_convergence} and weak convergence with $g(x)=x$ in \eqref{weak_convergence}, respectively. Namely, STERR$(\delta)$ gives an estimate for 
$\mathbb{E}[|X(E_\delta(T))-X_\delta(E_\delta(T))|],$
which is dominated by $C\delta^{1/2}$ due to Theorem \ref{Theorem_main2}, while WKERR$(\delta)$ is for $\bigl|\mathbb{E}[X(E_\delta(T))-X_\delta(E_\delta(T))]\bigr|,$
which has the upper bound $C\delta$ by Theorem \ref{Theorem_main4}.

\begin{figure}[tbp]
 \begin{minipage}{0.48\hsize}
  \hspace{-13mm} \includegraphics[bb= 0 0 587 276, width=100mm]{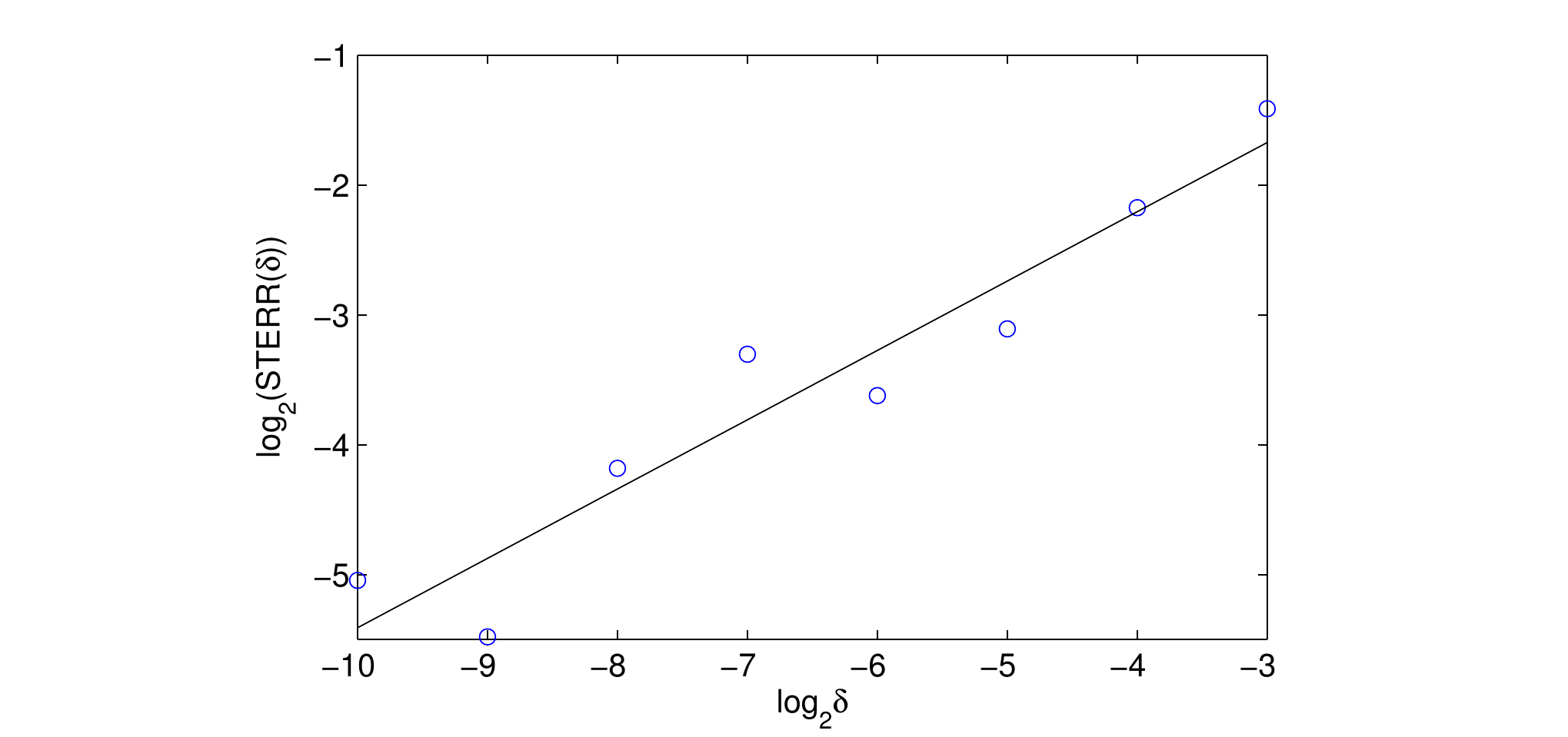}
  \caption{Plot of $\log_2 (\textrm{STERR}(\delta))$ versus $\log_2 \delta$ with the least squares line $y=0.5338\hspace{1pt}x-0.0719$.}
  \label{fig:two}
 \end{minipage}
 \hspace{5mm}
 \begin{minipage}{0.48\hsize}
 \hspace{-13mm} \includegraphics[bb= 0 0 587 276,width=100mm]{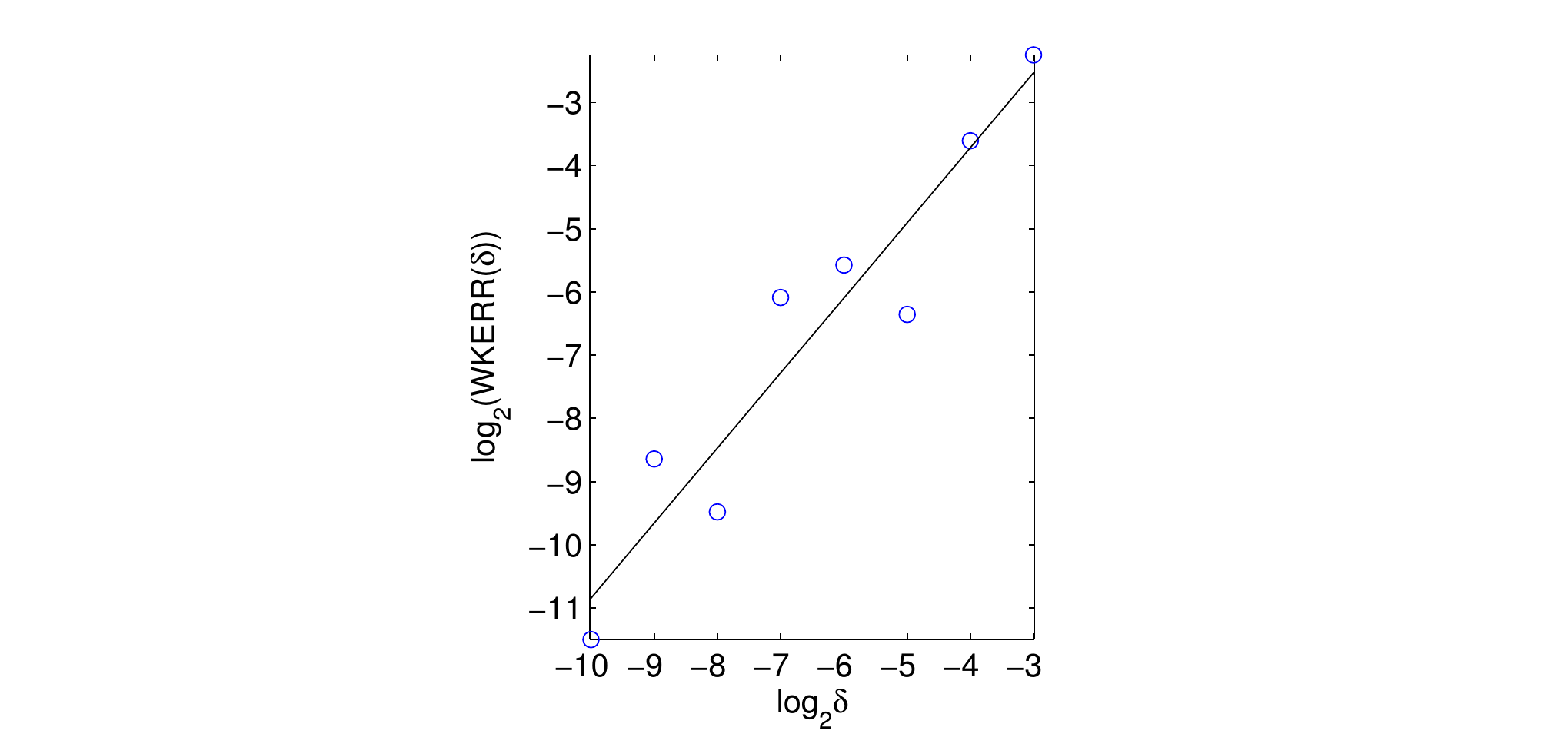}
  \caption{Plot of $\log_2 (\textrm{WKERR}(\delta))$ versus $\log_2 \delta$ with the least squares line $y=1.1882\hspace{1pt}x+1.0362$.}
  \label{fig:three}
 \end{minipage}
\end{figure}

Figure \ref{fig:two} gives a plot of $\log_2 (\textrm{STERR}(\delta))$ against $\log_2 \delta$.
It shows a linear trend with least squares slope being 0.5338. This is slightly higher than 0.5, which is the largest possible slope suggested by the estimate \eqref{strong_convergence} . 
On the other hand, Figure \ref{fig:three} provides a plot of $\log_2 (\textrm{WKERR}(\delta))$ versus $\log_2 \delta$, for which the least squares slope turns out to be  1.1882. This is close to 1.0 as suggested by \eqref{weak_convergence}. 
As the number of paths generated increases, the corresponding least squares slopes are expected to approach 0.5 and 1.0, respectively.

\vspace{8mm}

\noindent
{\textbf{Acknowledgments:}}
The authors would like to thank Professor Jan Rosi\'nski at the University of Tennessee for helpful discussions. 
We also appreciate the comments and suggestions of an anonymous referee which helped to improve the paper. This paper was written while E.\ Jum was a graduate student at the Department of Mathematics of the University of Tennessee. He thanks the people in the department for their warm support and encouragement.

  \bibliographystyle{plain} 
  \bibliography{KobayashiK_20151111}

\end{document}